\documentclass[12pt]{article}

\usepackage{amssymb,amsmath,amsfonts,amssymb}
\usepackage{graphics,graphicx,color}

\def\@abssec#1{\vspace{.05in}\footnotesize \parindent .2in
{\bf #1. }\ignorespaces}

\graphicspath{{/EPSF/}{Figures/}}
\DeclareGraphicsExtensions{.eps}

\setlength\textwidth{37.2pc}
\setlength\textheight{56pc}
\setlength\topmargin{-2cm}
\addtolength\oddsidemargin{-1.2cm}
\addtolength\evensidemargin{-1.2cm}

\newtheorem{theorem}{Theorem}[section]
\newtheorem{lemma}[theorem]{Lemma}
\newtheorem{proposition}[theorem]{Proposition}

\newtheorem{remark}[theorem]{Remark}

\def \Rm {\mathbb R}

\newcommand{\eps}{\varepsilon}

\newcommand{\dps}{\displaystyle}

\newcommand{\dint}{\displaystyle\int}

\newcommand{\cout}[1]{}

\newcommand{\fea}{{f^\eps_\alpha}}

\newcommand{\vny}{{v\cdot\nabla_y}}
\newcommand{\vnx}{{v\cdot\nabla_x}}
\newcommand{\pe}{{\psi^\eta}}
\newcommand{\pes}{{\psi^{\eta*}}}
\newcommand{\pea}{{\psi^\eta_\alpha}}
\newcommand{\peas}{{\psi^{\eta*}_\alpha}}
\newcommand{\Qea}{{Q^\eta_\alpha}}
\newcommand{\bQea}{{{\cal Q}^\eta_\alpha}}
\newcommand{\Tea}{{T^\eta_\alpha}}

 \renewcommand{\arraystretch}{1.5}

\title{A corrector theory for diffusion-homogenization limits of linear transport equations\thanks{ NBA and MP were partially supported by  ANR project BLAN07-2 212988. GB was partially supported by NSF Grant DMS-0804696. GB would like to thank the Universit\'e Paul Sabatier for their hospitality in the Spring of 2009, when part of this research effort was conducted.}}

\author{Guillaume Bal \thanks{Department of Applied Physics and 
        Applied Mathematics, Columbia University, 
        New York NY, 10027; gb2030@columbia.edu} \and Naoufel Ben Abdallah \and Marjolaine Puel \thanks{Institut de Math\'ematiques, Universit\'e de Toulouse and CNRS, Universit\'e Paul Sabatier, 31062 Toulouse Cedex 9, France; {\tt puel@math.univ-toulouse.fr}.}}

\begin{document}
 
\maketitle


\begin{abstract}
   This paper concerns the diffusion-homogenization of transport equations when both the adimensionalized scale of the heterogeneities $\alpha$ and the adimensionalized mean-free path $\eps$ converge to $0$. When $\alpha=\eps$, it is well known that the heterogeneous transport solution converges to a homogenized diffusion solution. We are interested here in the situation where $0<\eps\ll\alpha\ll1$ and in the respective rates of convergences to the homogenized limit and to the diffusive limit. Our main result is an approximation to the transport solution with an error term that is negligible compared to the maximum of $\alpha$ and $\frac\eps\alpha$. After establishing the diffusion-homogenization limit to the transport solution, we show that the corrector is dominated by an error to homogenization when $\alpha^2\ll\eps$ and by an an error to diffusion when $\eps\ll\alpha^2$.
   
   Our regime of interest involves singular perturbations in the small parameter $\eta=\frac\eps\alpha$. Disconnected local equilibria at $\eta=0$ need to be reconnected to provide a global equilibrium on the cell of periodicity when $\eta>0$. This reconnection between local and global equilibria is shown to hold when sufficient {\em no-drift} conditions are satisfied. The Hilbert expansion methodology followed in this paper builds on corrector theories for the result developed in \cite{NBAPuVo}.
\end{abstract}
 

\renewcommand{\thefootnote}{\fnsymbol{footnote}}
\renewcommand{\thefootnote}{\arabic{footnote}}

\renewcommand{\arraystretch}{1.1}





\section{Setting of the problem}
 
This paper studies the interaction between the convergence to a homogenized limit and the convergence to a diffusive limit in the context of linear transport equations, or linear Boltzmann equations \cite{Ce,BaGoPeSe}, that model the propagation of particles in scattering environments. These two phenomena have been widely studied in the past \cite{BeLiPa,Albook,Ev, LaKe,La1,La2,Po,DeGoPo}. The homogenization limit typically arises when the underlying coefficients oscillate at a scale $\alpha\ll1$ much smaller than the macroscopic scale at which phenomena are observed. The diffusion limit arises in highly scattering environments in which an equilibrium emerges in the velocity variable and a diffusion equation models the spatial behavior of the probability density of particles. High scattering is modeled by a mean free path $\eps\ll1$, where $\eps$ is the (adimensionalized) main distance between successive collisions with the underlying medium.

We assume here that the oscillations in the medium are periodic with period $\alpha\ll1$. When both $\alpha$ and $\eps$ converge to $0$, we expect the transport solution to converge to the solution of a homogenized diffusion equation. How fast convergence may occur and what are the main contributions to the error between the heterogeneous transport solution and its homogenized limit are the main problems of interest in this paper. We restrict ourselves to the case where the mean free path is (much) smaller than the correlation length $\alpha$. 
We thus set $\eta=\frac \eps \alpha$ and we assume that $\eta \ll1$. The equation for the particle density takes the form:
\begin{equation}
  \label{eq:trepsalpha}
  v\cdot\nabla \fea + \eps \fea + \dfrac 1\eps Q_\alpha \fea = \eps S_\alpha(x,v) := \eps S(x,\frac x\alpha,v),
\end{equation}
in an infinite domain $\Rm^d\times V$ where $V$ is a smooth, symetric, compact domain in the velocity space such that $0\not\in V$. This equation could also be seen as an evolution equation by a change of variables $u^{\eps}_\alpha=e^{-t}\fea$ in which the source $S$ would play the role of an initial condition. We restrict ourselves to the time independent setting and to the case of constant absorption to simplify. 

The source term $S$ is $1-$periodic in $y=\frac x\alpha$. We define the collision operator as
\begin{equation}
  \label{eq:Qalpha}
  Q_\alpha f = \Sigma(\dfrac x\alpha,v) f(x,v) - \dint_V \sigma(\dfrac x\alpha,v\rq{},v)f(x,v')d\nu(v').
\end{equation}
The choice of $\Sigma$ is such that the above operator is conservative in the following sense. Define
\begin{equation}
  \label{eq:Q}
  Qf(y,v) = \Sigma(y,v) f(y,v) - \dint_V \sigma(y,v',v) f(y,v') d\nu(v'),\quad y\in Y,
\end{equation}
where $Y=[0,1]^d$ is the unit cell. Then we assume the existence of $C^{-1}\geq\psi^\eta(y,v)\geq C>0$ such that 
\begin{equation}
  \label{eq:psieta}
  (\eta \vny + Q) \pe =0 ,\qquad Y\times V.
\end{equation}
We also define
\begin{equation}
  \label{eq:psietas}
  (-\eta\vny+Q^*)\pes =0,\qquad Y\times V,
\end{equation}
where $Q^*$ is the adjoint operator to $Q$ defined for a.e. $y\in Y$ and $g\in L^2(V)$ as 
\begin{equation}
  \label{eq:Qstar}
  Q^* g (v) = \Sigma(y,v) g(v) - \dint_V \sigma(y,v,v') g(v') d\nu(v').
\end{equation}

The simultaneous limit when the mean free path and the correlation length go to zero together with $\alpha=\eps$ has been considered in several recent papers  \cite{AlBa,Ba1,Ba2,Ba3,BATa,GoMe1,GoMe2,GoPo1,GoPo2,Se}.
In all those paper, only the case $\eta=1$ is considered except in \cite{Se}, where both $\psi^\eta$ and $\psi^{\eta*}$ are space independent and hence also independent of $\eta$. A first result in the case $\eta\ll1$ was obtained in \cite{NBAPuVo} for spatially dependent $\psi^\eta$ in the setting where $\psi^{*,\eta}\equiv1$. In that paper,  a two scale convergence result is established in the setting where the limiting behavior involves the homogenization of a heterogeneous diffusion. 
The proof of that result is based on the weak formulation of the transport equation and on the method of moments. 

The aim of the present work is to implement the Hilbert expansion method, which leads to a strong convergence result under appropriate smoothness conditions in the (restricted) setting where $\psi^{*,\eta}=\psi^*(v)$. This strong convergence has two  advantages. First, we are able to compute the correction terms in the expansion of the transport solution up to a term that is negligible compared to the maximum of $\alpha$ and $\eta=\frac\eps\alpha$. Second, we expect this result also to be the first step in the study of the nonlinear Boltzmann equation given by the Fermi-Dirac equation in the spirit of \cite{NBACh1,NBAChSc}. Note that in the case where $\eta$ goes to infinity, we expect the limit to be the diffusion approximation to the homogenized transport equation obtained in \cite{DuGo}.

The main mathematical difficulty of the present work arises because the limit $\eta\to0$ in \eqref{eq:psieta} is singular. In the limit $\eta=0$, local equilibria are obtained as a function of $v\in V$ for {\em each} position $y\in Y$. For $\eta>0$, a global equilibrium on $Y\times V$ emerges, as in the standard procedure obtained when $\eta=1$ \cite{AlBa,GoPo1,GoMe1,La1,La2,Se}. The passage from the local equilibria to the global equilibrium may in fact be arbitrarily complicated. Several conditions need to be imposed in order for a well-defined macroscopic equilibrium to arise. Even in the case $\eta=1$ do we need to impose a {\em no-drift} condition. In the presence of drift, advection dominates scattering and entirely different phenomena arise (see, e.g., \cite{GoMe1}, \cite{Ba3} and the derivation of Euler equations when advection is dominant). Under appropriate sufficient symmetry assumptions similar to (though more constraining than) those in \cite{AlBa}, we are able to verify the necessary no-drift conditions used in our derivation.

The rest of the paper is structured as follows. The main hypotheses of regularity and no-drift as well as the main results of this paper are described in section \ref{sec:main}. The main result is Theorem \ref{th:main} below. The rest of the paper is devoted to its proof. Global a priori estimates are formulated and proved in section \ref{sec:apriori}. The expansion of the transport solution in powers of $\eps$ is treated in section \ref{sec:eps}. The expansion in $\eta$ of several cell solutions is given in section \ref{sec:aux}. The definition and  $\eta$-dependence of spatial density terms are given in section \ref{sec:density}. These results are combined to finish the proof of the theorem in section \ref{sec:proof}. Several technical results obtained in \cite{NBAPuVo} and the generalization of their proofs if necessary are collected in the Appendix.


\section{Main result}
\label{sec:main}

Under regularity assumptions recalled below, a standard application of a Banach fixed point theorem  \cite{DaLi} ensures that (\ref{eq:trepsalpha}) admits a unique solution  in $H^k(\Rm^d,L^2(V))$. The limit of $f^{\eps,\eta}(x,v):=f^\eps_\alpha(x,v)$ as $\eta\to0$, however, involves singular perturbations. The reason is that the local equilibria in \eqref{eq:psieta}  and \eqref{eq:psietas} become degenerate in the limit $\eta\to0$. In this limit, equilibria at different points $y\in Y$ become disconnected and this can result in a very large effect at the macroscopic scale $x$. We consider here situations where the equilibria remain smooth in the $y$ variable and generate no {\em drift}. Drift effects are ubiquitous in the homogenization of transport equations, with drastic effects as may be seen in, e.g., \cite{Ba3}. Diffusion limits arise under sufficient {\em no-drift} conditions as in, e.g., \cite{AlBa}, which ensure that transport is not in an advection-dominated regime. Our analysis in this paper shows that diffusion-like regimes are still valid in the singular limit $\eta\to0$ under appropriate no-drift assumptions. We show that these assumptions are consequences of symmetries of the transport coefficients, which we now define.

Our first main assumptions is that $\psi^{*,\eta}(y,v)$ is independent of $y$ and hence of $\eta$. Note that $Q$ in \eqref{eq:psieta} is independent of $\eta$ and it is therefore not clear why non-trivial solutions would exist for all values of $\eta$. Here, we assume that a.e. $y\in Y$, we have an equilibrium $\psi^*(v)$ solution of 
\begin{equation}
  \label{eq:psis} Q^*(\psi^*)=0 \qquad \mbox{ in } V.
\end{equation}
The equilibrium solution is allowed to depend on $v$ but is independent of $y\in Y$.
When $\sigma(y,v',v)$ and $\Sigma(y,v)$ are continuous functions bounded above and below by positive constants, then $\psi^*$ can be chosen positive and normalized so that $\int_V (\psi^*)^2(y,v) d\nu(v)=1$ a.e. $y\in Y$. Moreover, up to normalization, $\psi^*(v)$ is the unique solution to \eqref{eq:psis} as an application of the Krein Rutman theorem \cite{AlBa,DaLi} for the compact operators defined for a.e. $y\in Y$:
\begin{equation}\label{eq:compact} 
   f \mapsto \dfrac1{\Sigma(y,v)} \dint_V \sigma(y,v,v') f(v') d\nu(v'),
\end{equation}
which preserve the (solid) cone of positive continuous functions. 

The no-drift conditions mentioned above will be verified under sufficient symmetry assumptions. We first assume that:
\begin{equation}
  \label{eq:symsigma}
  \sigma(y,v',v)=\sigma(y,-v',-v)=\sigma(-y,v',v),\qquad \Sigma(y,v)=\Sigma(y,-v)=\Sigma(-y,v).
\end{equation}
We deduce that  $\psi^*(-v)=\psi^*(v)$. Note that a method to construct local equilibria consists of selecting $\sigma$ satisfying the above symmetries, $\psi^*$ arbitrary (uniformly positive) such that $\psi^*(-v)=\psi^*(v)$, and finally define $\Sigma(y,v)$ by \eqref{eq:psis}, which also satisfies \eqref{eq:symsigma}.

We also obtain the existence of a unique, bounded, positive, solution $\psi(y,v)$ of the adjoint equation
\begin{equation}
  \label{eq:psi} Q(\psi(y,\cdot))=0, \qquad \mbox{ in } V,
\end{equation}
normalized so that $\int_V \psi(y,v)\psi^*(y,v)d\nu(v) =1$ a.e. $y\in Y$. It is not difficult to observe that $\psi(y,-v)=\psi(y,v)=\psi(-y,v)$ when \eqref{eq:symsigma} holds. 

\cout{For the operators $Q$ and $Q^*$, we have the following {\em dissipation} properties
\begin{equation}
\label{eq:dissipQ}
||f-\psi\int_V f \psi^*d\nu(v) ||^2_{L^2(V)}\leq \int_V Q(f)\frac{f\psi^*}{\psi}d\nu(v),
\end{equation}
\begin{equation}
\label{eq:dissipQ*}
||f-\psi^*\int_V f \psi d\nu(v) ||^2_{L^2(V)}\leq \int_V Q^*(f)\frac{f\psi}{\psi^*}d\nu(v),
\end{equation}
which hold for each $f\in L^2(V)$ and a.e. $y\in Y$ and come from the following standard point-wise equality
$$
Q(f)\frac{f\psi^*}{\psi} (v)=-\frac{1}{2}\int_V \sigma(y,v',v)\psi(y,v)\psi^*(v')\Big|\frac{f}{\psi}(v)-\frac{f}{\psi}(v')\Big|^2 d\nu(v')+Q(\frac{f^2}{\psi})\frac{\psi^*}{2}(v).
$$}
The Krein Rutman theorem for \eqref{eq:compact} (all eigenvalues not equal to $1$ have modulus strictly smaller than $1$) shows that $0$ is a simple eigenvalue associated to $Q$ and $Q^*$ and that all other eigenvalues of $Q$ and $Q^*$ have strictly positive real part \cite{AlBa,DaLi}. On the vector space of functions $f\in L^2(V)$ such that $\int_V f(v)\psi^*(v)d\nu(v)=0$, we define
\begin{equation}
  \label{eq:Qinv}
  Q^{-1}f  := -\dint_0^\infty e^{-rQ} f dr,
\end{equation}
which converges a.e. $y\in Y$ thanks to the spectral gap we just mentioned. Note that $(Q^{-1}f,\psi^*)_{L^2(V)}=0$ by construction. The inverse operator $Q^{-*}:=(Q^*)^{-1}$ is defined similarly. We verify that under \eqref{eq:symsigma}, both $Q^{-1}$ and $Q^{-*}$ preserve the subspaces of even and odd functions in the variable $v$.

With $\psi^*$ seen as a normalized solution of $(-\eta \vny + Q^*)\psi^*=0$ on $Y\times V$, we also obtain the existence of unique, bounded, positive, solutions $\psi^{\eta}(y,v)$ of \eqref{eq:psieta} normalized such that $\int_{Y\times V} \psi^{\eta}(y,v)\psi^*(v) d\nu(v) =1$.
Upon introducing $T^\eta=\eta \vny+Q$, we also define the inverse operator
\begin{equation}
  \label{eq:Tinv}
  T^{\eta-1} f= -\dint_0^\infty e^{-rT^\eta} f dr,\qquad f\in L^2(Y\times V) \,\,\mbox{ s.t. } \dint_{Y\times V}\hspace{-.3cm} f(y,v) \psi^*(v) dy d\nu(v)=0.
\end{equation}
\cout{OK a enlever. Je suis d'accord pour Krein Rutmann. Pour le papier avec Michael, je lui avais proose les deux solutions, Krein Rutmann ou la construction a la main, il avait choisi la construction a la main. }
\cout{ As we mention above, no-drift conditions are necessary to ensure that a diffusive macroscopic equilibrium emerges in the limit $\eta\to0$. We assume the  symmetry condition  $\sigma(-y ,v ,v')=\sigma(y  ,v ,v' )$ and the reciprocity condition $\sigma(y  ,-v  ,-v' )=\sigma(y  ,v ,v' )$. An additional sufficient relation that ensures the no-drift condition is the following symmetry relation:
\begin{equation}
  \label{eq:tildesigma}
  \tilde\sigma(y,v',v) := \dfrac{\sigma(y,v',v)}{\psi(y,v')\psi^*(v)} = \tilde\sigma(y,v,v').
\end{equation} }

Finally, we observe that under \eqref{eq:symsigma}, $\psi^\eta(y,-v)$ and $\psi^\eta(-y,v)$ are also solutions of \eqref{eq:psieta}. Once properly normalized, since $\psi^*(-v)=\psi^*(v)$, we deduce that $\psi^\eta(y,v)=\psi^\eta(y,-v)=\psi^\eta(-y,v)$.


\medskip

Let us collect our main {\bf Assumptions}:
\begin{itemize}

\item[(H1)] The velocity variables $v$ lies in a compact, symmetric 
set $V$ of $\Rm^d$ equiped with a symmetric probability 
measure  $\nu$.
 There exist constants $C$, $\gamma>0$ such that
$\nu(\{v\in V,~|v\cdot\xi|\leq h\})\leq Ch^\gamma,\quad\mbox{for all }\xi\in S^{d-1}, \; h>0~.$
In particular, $\nu(\{v \in V,~|v\cdot\xi|=0\})=0$ for all $\xi\ne 0$ so that
$$
v\cdot\xi=0~~\hbox{ a.e. in } v \hbox{ implies } \xi =0~.
$$
\item[(H2)] The source term $S \in H^4(\Rm^d(C^\infty(Y),L^2(V)))$
\item[(H3)] The scattering coefficient $\sigma$ is in  ${\cal C}^0(V_{v} \times V_{v' };{\cal C}^\infty_{per}(\Rm^d_y))$. It is bounded from above and below by positive constants and is 1-periodic with respect to the variable $y$. The coefficient $\Sigma(y,v)$ is defined implicitly in \eqref{eq:psis}. 
\item[(H4)] The symmetry relations \eqref{eq:symsigma} hold so that the uniquely defined (after proper normalization) solutions $\psi^*(v)$, $\psi(y,v)$ and $\psi^\eta(y,v)$ of \eqref{eq:psi}, \eqref{eq:psis}, and \eqref{eq:psieta}, respectively, satisfy $\psi^*(-v)=\psi^*(v)$ and $\psi(y,v)=\psi(-y,v)=\psi(y,-v)$ as well as $\psi^\eta(y,v)=\psi^\eta(-y,v)=\psi^\eta(y,-v)$.

\end{itemize}

We want to stress again that the limit $\eta\to0$ is singular. The limit of $\psi^\eta$ as $\eta\to0$ is therefore not necessarily equal to $\psi(y,v)$.
We are now ready to state our main results on the Hilbert expansion of the solution $f^{\eps,\eta}:=f^\eta_\alpha$ of \eqref{eq:trepsalpha}. The main result of this paper is the following theorem, which provides a strong convergence result for the corrector to homogenization theory:
\begin{theorem}\label{th:main}  
Let $f^{\eps,\eta}$ be the solution of (\ref{eq:trepsalpha}). Then the following expansion holds
\begin{equation}\label{eq:main}\begin{array}{rcl}
\hspace{-1.5cm}\dps\Big\|f^{\eps,\eta} &-&n^{0,0}(x)\rho^0(\frac{\eta x}{\eps})\psi(\frac{\eta x}{\eps},v) \\[3mm]&-&\eta \Big[n^{0,0}(x) Q^{-1}\big(-v\cdot\nabla_y(\rho^0(\frac{\eta x}{\eps})\psi(\frac{\eta x}{\eps},v))\big)+n^{0,1}(x)\rho^0(\frac{\eta x}{\eps})\psi(\frac{\eta x}{\eps},v)\Big]\\
\\\dps&-&\dfrac{\eps}{\eta}\Big[\theta^{-1}(y)\psi(\frac{\eta x}{\eps},v)\cdot\nabla_xn^{0,0}(x)+n^{1,-1}(x)\rho^0(\frac{\eta x}{\eps})\psi(\frac{\eta x}{\eps},v)\Big]\Big\|_{L^2(\Rm^d\times V)}=o(\eta+\frac{\eps}{\eta}) \hspace{-2cm}
\end{array}
\end{equation}
where $\psi(y,v)$ and $\psi^*(v)$ are solutions of \eqref{eq:psi} and \eqref{eq:psis}, respectively, and $Q^{-1}$ is defined in \eqref{eq:Qinv}.
The microscopic density $\rho^0(y)$ is the unique solution of  the elliptic equation
$$
L(\rho^0)=0\quad \mbox{with the normalization}\quad \int_Y\rho^0(y)dy=1,
$$
where
$$
L(\rho)=-\int_V \psi^*(v) v\cdot\nabla_y(Q^{-1}(v\cdot\nabla_y(\psi(y,v)\rho(y))))d\nu(v).
$$
The  function $\theta^{-1}$ is given by
$$
\theta^{-1}=L^{-1}\left(\int_V(vQ^{-1}(-v\cdot\nabla_y\rho^0(y)\psi(y,v))-v\cdot\nabla_y(Q^{-1}(v\psi(y,v))\rho^0(y)))\psi^*(v)d\nu(v)\right),
$$
with $L^{-1}$ defined in Proposition \ref{propl} below on functions in $L^2(Y)$ that average to $0$ on $Y$
and  the macroscopic density is given by the diffusion equation
$$\begin{array}{rcl}
n^{0,0}(x)-\nabla_x\cdot({\bf D}\cdot\nabla_x n^{0,0}(x))&=&\dint_Y\int_V S(x,y,v)\psi^*(v)dyd\nu(v).
 \end{array}
$$
The diffusion coefficient in the preceding equation is given by the expression
$$
{\bf D}=\int_V\int_Y(\overline \chi^{*0}(y,v)\otimes v\rho^0(y)\psi(y,v)+\theta^{*-1}(y)\psi^*(v)\otimes vQ^{-1}(-v\nabla_y(\rho^0(y)\psi(y,v)))d\nu(v)dy
$$
in which we have defined 

$$\begin{array}{rcl}
\overline \chi^{*0}&=&Q^{*-1}\big(v\psi^*+v\cdot\nabla_y(\theta^{*-1}\psi^*)\big) \\[0mm]
\theta^{*-1}&=&L^{*-1}\Big(\dint_V \psi(y,v)v\cdot\nabla_y Q^{*-1}\big(v\psi^*(v)\big)d\nu(v)\Big).
\end{array}
$$
where $L^{*-1}$ is also defined in Proposition \ref{propl}.
The correctors $n^{0,1}(x)$ and $n^{1,-1}(x)$ satisfy the same elliptic equation as $n^{0,0}(x)$ with different source terms. Their expression is given explicitly in Proposition \ref{prop:densites} below.
\end{theorem}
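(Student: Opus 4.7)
The overall strategy is a two-parameter Hilbert expansion of $f^{\eps,\eta}$ in the small parameters $\eps$ and $\eta$, exploiting the two-scale structure by setting $y=\tfrac{x}{\alpha}=\tfrac{\eta x}{\eps}$. I would first postulate an ansatz of the form $f^{\eps,\eta}(x,v)\approx\sum_{k\ge 0}\eps^k f^k(x,x/\alpha,v)$ and substitute into \eqref{eq:trepsalpha}, using that $v\!\cdot\!\nabla_x$ acting on such a function produces $v\!\cdot\!\nabla_x+(\eta/\eps)\,v\!\cdot\!\nabla_y$. Matching powers of $\eps$ then generates a cascade controlled at each order by the cell operator $T^\eta=\eta\,\vny+Q$: at order $\eps^{-1}$ one has $T^\eta f^0=0$, forcing $f^0(x,y,v)=n(x)\psi^\eta(y,v)$; at order $\eps^0$ one solves $T^\eta f^1=-v\!\cdot\!\nabla_x f^0$, whose solvability is guaranteed precisely by the no-drift symmetries built into (H4); at order $\eps^1$ the solvability condition determines the macroscopic equation for $n(x)$. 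This is the content of the $\eps$-expansion that I would carry out in the spirit of section \ref{sec:eps}.

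The singular limit $\eta\to 0$ is handled in a second, nested expansion. Since $T^\eta$ degenerates as $\eta\to 0$ (its kernel on $Y\!\times\!V$ is one-dimensional for $\eta>0$ but infinite-dimensional at $\eta=0$), I would expand $\psi^\eta=\rho^0(y)\psi(y,v)+\eta\,\psi^{\eta,1}+O(\eta^2)$ and similarly expand $T^{\eta-1}$ applied to admissible right-hand sides. The reconnection of the disconnected local equilibria into a single global equilibrium is what determines $\rho^0$: substituting the $\eta$-expansion of $\psi^\eta$ into \eqref{eq:psieta} and imposing solvability at the first two orders produces the elliptic equation $L\rho^0=0$ with $L$ as defined in the statement. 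Expanding $T^{\eta-1}(v\!\cdot\!\nabla_x f^0)$ in the same way generates the corrector $\theta^{-1}(y)$ through the solvability condition at the corresponding order, which can be inverted using $L^{-1}$ from Proposition \ref{propl}. The analogous computation on the adjoint side, using $\psi^*$ rather than $\psi^\eta$ (which is the key simplification allowed by (H4)), produces $\overline\chi^{*0}$ and $\theta^{*-1}$.

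Once the cell correctors are in hand, I would derive the macroscopic diffusion equation for $n^{0,0}$ by applying $\int_{Y\times V}(\,\cdot\,)\psi^*(v)\,dy\,d\nu(v)$ to the order-$\eps$ equation, integrating by parts in $y$, and collecting the contributions from $f^0$ and $f^1$. The two groups of terms produced by this procedure are exactly the two pieces of the diffusion tensor $\mathbf{D}$ displayed in the statement, with the $\overline\chi^{*0}$ piece coming from the ordinary diffusion corrector and the $\theta^{*-1}$ piece coming from the singular $\eta$-expansion of the $T^\eta$-inverse. The next correctors $n^{0,1}$ and $n^{1,-1}$ arise by imposing solvability one order further in $\eta$ and in $\eps/\eta=\alpha$ respectively, which yields the same operator $L$ applied to $n^{0,1}$ and $n^{1,-1}$ with explicit source terms (collected in Proposition \ref{prop:densites}).

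Finally, denoting by $\widetilde f^{\eps,\eta}$ the truncated ansatz appearing inside the norm in \eqref{eq:main}, the remainder $R^{\eps,\eta}=f^{\eps,\eta}-\widetilde f^{\eps,\eta}$ satisfies a transport equation of the same type as \eqref{eq:trepsalpha} with a source $\mathcal{S}^{\eps,\eta}$ built from the uncomputed higher-order terms of the ansatz. By construction all of these terms are at least of order $\eta^2$, $\eps^2/\eta^2$, or $\eps$, and the regularity hypotheses (H2)--(H3) guarantee the needed smoothness of $\psi$, $\psi^*$, $\rho^0$, $\theta^{-1}$ etc. Applying the global a priori estimates of section \ref{sec:apriori} to the equation for $R^{\eps,\eta}$ then gives $\|R^{\eps,\eta}\|_{L^2(\Rm^d\times V)}=o(\eta+\eps/\eta)$. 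The main obstacle throughout is the nested singular expansion of Step~2: one has to verify at each order that the solvability conditions for inverting $Q$, $Q^*$, $T^\eta$ and $L$ are fulfilled, which is where the symmetry assumption (H4) is essential because it forces the relevant first moments against $\psi^*$ or $\psi$ to vanish by parity in $v$ and $y$, thus ruling out the drift terms that would otherwise destroy the diffusive scaling.
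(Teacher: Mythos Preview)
Your outline is essentially the paper's proof: an $\eps$-Hilbert expansion at fixed $\eta$ (section~\ref{sec:eps}), a nested $\eta$-expansion of the cell functions $\psi^\eta$, $\chi^\eta$, $\chi^{\eta*}$ (section~\ref{sec:aux}) and of the densities $n^{0,\eta}$, $n^{1,\eta}$ (section~\ref{sec:density}), and closure by the a~priori estimate of Proposition~\ref{prop:apriori}. Two small points deserve correction. First, the correctors $n^{0,1}$ and $n^{1,-1}$ are governed by the \emph{macroscopic} diffusion operator $I-\nabla_x\!\cdot\!(\mathbf D\nabla_x\cdot)$, not by the cell operator $L$; the latter acts only on $y$-periodic objects such as $\rho^0$ and $\theta^{-1}$. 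Second, the paper carries the $\eps$-expansion out to order $\eps^3$ (terms $f^{0,\eta},\dots,f^{3,\eta}$): because $\|f^{k,\eta}\|\lesssim\eta^{-k}$ (Proposition~7.1 and the bounds \eqref{est-t}--\eqref{est-t-2} on $T^{\eta-1}$), the a~priori estimate \eqref{eq:ape2} applied to the remainder yields $\|r^{\eps,\eta}\|\lesssim \eps^2/\eta^2+\eps^3/\eta^3=o(\eps/\eta)$, and stopping earlier would not suffice.
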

Before proving this theorem, we make a few remarks.
\begin{remark}\rm
  The leading term in the expansion of $f^\eps_\alpha$ is given by $n^{0,0}(x)\rho^0(\frac{x}{\alpha})\psi(\frac{x}{\alpha},v)$. The behavior at the macroscopic level is given by $n^{0,0}(x)$, the solution of a standard diffusion equation. The microscopic level is given by the product of two terms. The first contribution to the product is the standard local solution $\psi(y,v)$, which indicates how particles are distributed in the $v$ variable for each $y\in Y$. The second, less standard, contribution is given by $\rho^0(\frac x \alpha)$ and indicates how the local (for each $y$) equilibria are related to one-another to generate a global equilibrium at the level of the cell $Y$.
\end{remark}
\begin{remark}\rm
\noindent The above expansion implies that when $\eta\ll\frac{\eps}{\eta}$, then the corrector is given by
$$
\theta^{-1}\psi(\frac{\eta x}{\eps},v)\cdot\nabla_xn^{0,0}(x)+n^{1,-1}(x)\rho^0(\frac{\eta x}{\eps})\psi(\frac{\eta x}{\eps},v).
$$
This is a regime of (relatively) low scattering where the corrector to homogenization (characterized by a term linear in $\nabla n^{0,0}$) dominates. The contribution $n^{1,-1}$ provides a correction to the influence of the local equilibria at each $y\in Y$ to a global equilibrium on $Y$. 

When $\frac{\eps}{\eta}\ll\eta$, the corrector is given instead by
$$
n^{0,0}(x) Q^{-1}\big(-v\cdot\nabla_y(\rho^0(\frac{\eta x}{\eps})\psi(\frac{\eta x}{\eps},v))\big)+n^{0,1}(x)\rho^0(\frac{\eta x}{\eps})\psi(\frac{\eta x}{\eps},v).
$$
This is the regime of high scattering, where the correction to approximating the transport solution by a diffusion approximation dominates the correction coming from the homogenization procedure. The passage from local to global equilibria on $Y$ generates a corrector described by $n^{0,1}(x)$.
\end{remark}
The rest of the paper is devoted to the proof of the theorem.


\section{A priori estimates}
\label{sec:apriori}
We start with an estimate that controls the remainder terms in the Hilbert expansion:
\begin{proposition} \label{prop:apriori} Let $\fea$ be the solution of (\ref{eq:trepsalpha}).  Then we have:
\begin{equation}
  \label{eq:ape2}
  \|\fea\| + \dfrac{1}{\eps} \|\fea-\pea\overline{\dfrac{\fea}{\pea}}\| \leq C \Big( \|\eps S_\alpha\| + 
     \| \overline{S_\alpha\peas} \|\Big),
\end{equation}
where $\|\cdot\|$ is the $L^2(\Rm^d\times V)$-norm  and for $u\in L^2(\Rm^d\times V)$ we have defined 
\begin{equation}
  \label{eq:baru}
  \bar u(x) = \dfrac{1}{|V|}\dint_V u(x,v) d\nu(v).
\end{equation}
\end{proposition}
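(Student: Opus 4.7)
The plan is to multiply (\ref{eq:trepsalpha}) by the weighted test function $\peas \fea/\pea$ and integrate over $\Rm^d\times V$. Setting $h=\fea/\pea$, the multiplier becomes $\pes\,h$, and the aim is to convert the transport-plus-collision contribution into a nonnegative quadratic form (a dissipation) that controls $\|h-\bar h\|_{L^2}$, while the $\eps$-absorption term controls $\|h\|_{L^2}$ itself.

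The engine of the argument is the pointwise identity $\vnx\pea=-\frac{1}{\eps}Q\pea$, obtained by applying the scaling $y=x/\alpha$ to (\ref{eq:psieta}) and using $\eta\alpha=\eps$. Combined with $Q^*\pes=0$, which says that $\Sigma(y,v)\pes(v)=\dint_V\sigma(y,v,v')\pes(v')d\nu(v')$ for a.e.\ $y$, one integration by parts in $x$ on the transport term plus a rearrangement of $Q_\alpha\fea$ makes the $\Sigma$-terms cancel. A relabeling $v\leftrightarrow v'$ in one of the resulting double integrals then produces the clean identity
\[
\frac{1}{2\eps}\dint_{\Rm^d\times V\times V}\sigma(\tfrac{x}{\alpha},v',v)\pe(\tfrac{x}{\alpha},v')\pes(v)\big(h(x,v)-h(x,v')\big)^2\,dx\,d\nu(v')d\nu(v)+\eps\dint_{\Rm^d\times V}\pea\pes\,h^2\,dx\,d\nu(v)=\eps\dint_{\Rm^d\times V} S_\alpha\pes\,h\,dx\,d\nu(v).
\]
The uniform lower bounds on $\sigma$, $\pe$, $\pes$ from (H3)--(H4) and the elementary identity $\int_V\int_V(h-h')^2d\nu d\nu'=2|V|\|h-\bar h\|_{L^2(V)}^2$ convert the left-hand side into a coercive quantity $c\eps^{-1}\|h-\bar h\|^2+c\eps\|h\|^2$.

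For the right-hand side I split $h=\bar h+(h-\bar h)$. The mean component equals $|V|\dint_{\Rm^d}\bar h(x)\,\overline{S_\alpha\pes}(x)\,dx$, controlled by $\|\bar h\|_{L^2(\Rm^d)}\|\overline{S_\alpha\pes}\|_{L^2(\Rm^d)}\leq C\|h\|\,\|\overline{S_\alpha\pes}\|$ using $\|\bar h\|_{L^2(\Rm^d)}\leq|V|^{-1/2}\|h\|_{L^2(\Rm^d\times V)}$, while the fluctuation part is bounded by $C\|S_\alpha\|\,\|h-\bar h\|$. After multiplying by $\eps$ and applying Young's inequality with weights $c\eps$ and $c/\eps$ respectively, the $\eps\cdot\|S_\alpha\|\cdot\|h-\bar h\|$ piece is absorbed into the dissipation and the $\eps\cdot\|\overline{S_\alpha\pes}\|\cdot\|h\|$ piece into the absorption, yielding
\[
\|h\|^{2}+\eps^{-2}\|h-\bar h\|^{2}\;\leq\;C\big(\|\overline{S_\alpha\pes}\|^{2}+\|\eps S_\alpha\|^{2}\big).
\]
Returning to $\fea=\pea h$ and using $C^{-1}\leq\pea\leq C$ gives $\|\fea\|\leq C\|h\|$ and $\|\fea-\pea\overline{\fea/\pea}\|=\|\pea(h-\bar h)\|\leq C\|h-\bar h\|$, which is (\ref{eq:ape2}).

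The main delicate step I anticipate is the collision-symmetrization: the three contributions arising from $\vnx\pea$ and $Q_\alpha\fea$ collapse into the nonnegative quadratic $\int\sigma\pe\pes(h-h')^2$ only after invoking $Q^*\pes=0$ to convert the $\Sigma\pes$ mass into a $\sigma$-integral and then performing a careful $v\leftrightarrow v'$ relabeling; doing this without sign errors is the only subtle bookkeeping. A secondary technical point is justifying the integration by parts over all of $\Rm^d$: this follows from a standard density/cutoff argument using the $H^k(\Rm^d,L^2(V))$ regularity of $\fea$ noted at the beginning of Section \ref{sec:main}, since regular truncations produce boundary contributions that vanish in the limit.
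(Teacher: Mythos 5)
Your proof is correct and follows essentially the same route as the paper: your multiplier $\peas\fea/\pea$ is exactly the paper's test function $u\,\pea\peas$ after its change of unknown $u=\fea/\pea$, your symmetrized identity is precisely the paper's Dirichlet-form computation \eqref{eq:bQea}--\eqref{eq:varequ}, and your splitting of the source into mean and fluctuation parts with weighted Young inequalities reproduces \eqref{eq:firstineq}--\eqref{eq:ape}. The only cosmetic difference is that you invoke $Q^*\pes=0$ directly (valid here since the standing assumption gives $\psi^{\eta*}=\psi^*(v)$), whereas the paper works from the identity $(-\eta\vny+Q^{\eta*})(\pe\pes)=0$, which keeps the same computation valid even for $y$-dependent $\pes$.
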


\begin{proof}
We verify that 
\begin{equation}
  \label{eq:1pepes}
  (\eta\vny+Q^\eta) 1 =0 ,\qquad  (-\eta \vny + Q^{\eta*}) (\pe\pes)=0,\qquad Y\times V,
\end{equation}
where we have defined the rescaled transport operator
\begin{equation}
  \label{eq:Qeta}
  Q^\eta u(y,v) = \dint_V \dfrac{\sigma(y,v',v)\pe(y,v')}{\pe(y,v)} \big[u(y,v)-u(y,v')\big] d\nu(v').
\end{equation}

The reason for introducing the operator $Q^\eta$ is that 
\begin{equation}
  \label{eq:u}
  (\vnx + \dfrac1\eps Q^\eta_\alpha + \eps ) u = \eps F_\alpha(x,v) :=\eps F(x,\frac x\alpha,v),
\end{equation}
where we have defined
\begin{equation}
  \label{eq:uF}
  u(x,v) = \dfrac{ \fea(x,v)}{\pe(\frac x\alpha,v)},\qquad
  F(x,y,v) = \dfrac{ S(x,y,v)}{\pe(y,v)},
\end{equation}
and
\begin{equation}
  \label{eq:Qea}
  Q^\eta_\alpha u = \dint_V \dfrac{\sigma(\frac x\alpha,v',v)\pe(\frac x\alpha,v')}{\pe(\frac x\alpha,v)} \big[u(x,v)-u(x,v')\big] d\nu(v').
\end{equation}

Define the operator
\begin{equation}
  \label{eq:Tea}
  \Tea = \vnx + \dfrac1\eps \Qea.
\end{equation}
Then we recast \eqref{eq:u} as $(\eps+\Tea)u=\eps F_\alpha$.
We verify that 
\begin{equation}
  \label{eq:bQea}\begin{array}{rcl}
  \bQea(h) &:=& (\Tea h,\pea\peas h)\\&= &\dfrac1\eps\dint
    \sigma(\frac x\alpha,v\rq{},v)\pea(x,v\rq{})\peas(x,v)\frac{|h(x,v)-h(x,v\rq{})|^2}2 dxd\nu(v)d\nu(v)\rq{}.
    \end{array}
\end{equation}
This comes from the fact that $\vnx(\pea\peas)=\frac1\eps Q^{\eta*}_\alpha(\pea\peas)$ so that
\begin{displaymath} 
 \bQea(h)  = \dfrac1\eps(\Qea(h) h - \Qea\frac {h^2}2,\pea\peas).
 \end{displaymath}

Let $\rho=\rho(x)$ and $s=u-\rho$ for an arbitrary $\rho(x)$ independent of $v$. Then we find that $\bQea(\rho)=0$ and more importantly that $\bQea(u)=\bQea(s)$.  Now for $s$ such that $\int_V s d\nu(v)=0$, we find that 
\begin{equation}
  \label{eq:coerc}
  \bQea(s) \geq \dfrac\beta\eps\|s\|^2,
\end{equation}
for some $\beta>0$ as is clear from \eqref{eq:bQea} provided that $\pea\peas$ is bounded from below by a positive constant uniformly in $\eta$. We thus define $\rho=\bar u$ and $s=u-\bar u$ so that $\bar u$ is independent of $v$ and $s$ mean zero in $v$. Multiplying \eqref{eq:u} by $u\pea\peas$ and integrating yields
\begin{equation}
  \label{eq:varequ}
  \eps(u,u\pea\peas) + \bQea(s) = (\eps F_\alpha , u \pea\peas).
\end{equation}
As a consequence, 
\begin{equation}
  \label{eq:firstineq}
  \eps \|u\|^2 + \dfrac{1}{\eps} \|s\|^2 \leq |(\eps F_\alpha , s \pea\peas)|
       + |(\eps \overline{F_\alpha\pea\peas}, \rho)|.
\end{equation}
From this, we deduce the a priori estimate
\begin{equation}
  \label{eq:ape}
  \|u\| + \dfrac{1}{\eps} \|u-\bar u\| \leq C \Big( \|\eps F_\alpha\| + 
     \| \overline{F_\alpha\pea\peas} \|\Big).
\end{equation}
In the variables $\fea$, this is equivalent to (\ref{eq:ape2}).
\end{proof}

\section{Expansion in $\eps$}
\label{sec:eps}
To emphasize the dependency in $\eta$, let us denote $f^{\eps,\eta}:=f^\eps_\alpha$ the solution of (\ref{eq:trepsalpha}). As in the standard derivation of diffusion approximations, we first expand $f^{\eps,\eta}$ in powers of $\eps$ at a fixed (arbitrary) value of $\eta$. We prove the following proposition.

\begin{proposition}\label{prop:exp} The solution $f^{\eps,\eta}$ can be expanded as follows 
\begin{equation}\label{eq:expeps}
f^{\eps,\eta}= n^{0,\eta}(x)\psi^\eta(\frac{\eta x}{\eps},v)+\eps f^{1,\eta}(x,\frac{\eta x}{\eps},v)+\eps^2f^{2,\eta}(x,\frac{\eta x}{\eps},v)+\eps^3f^{3,\eta}(x,\frac{\eta x}{\eps},v)+r^{\eps,\eta}(x,v)
\end{equation}
where we have defined:
\begin{eqnarray*}
f^{1,\eta}&=&\dps T^{\eta -1}(-v\cdot\nabla_x(n^{0,\eta}\psi^\eta))+n^{1,\eta}\psi^\eta\\
f^{2,\eta}&=&\dps T^{\eta -1}(-v\cdot\nabla_xn^{1,\eta}\psi^\eta)+\overline f^{2,\eta}\\
\overline f^{2,\eta}&=&\dps T^{\eta -1}(S-v\cdot\nabla_xT^{\eta -1}(n^{0,\eta}\psi^\eta) -f^{0,\eta})\\
f^{3,\eta}&=&\dps T^{\eta -1}(-v\cdot\nabla_x f^{2,\eta}-f^{1,\eta}).
\end{eqnarray*}
The operator $T^{\eta-1}$ is defined in \eqref{eq:Tinv}.
The density $n^{0,\eta}$ satisfies the diffusion equation
\begin{equation}\label{eq:difneta}
n^{0,\eta}-\nabla_x\cdot\int_{Y\times V}\hspace{-.3cm} v\chi^\eta (y,v)\psi^{*}(v)d\nu(v)dy\,\nabla_xn^{0,\eta}=\int_{Y\times V}\hspace{-.3cm} S(x,y,v)\psi^{\eta*}(v)d\nu(v)dy
\end{equation}
where $\chi^\eta=T^{\eta -1}(v\psi^\eta)$ and the density for the correcting term $n^{1,\eta}$ satisfies
\begin{equation}\label{eq:difn1eta}
n^{1,\eta}-\nabla_x\cdot\int_{Y\times V}\hspace{-.6cm} v\chi^\eta(y,v)\psi^{*}(v)d\nu(v)dy\,\nabla_xn^{1,\eta}=-\int_{Y\times V}\hspace{-.6cm} v\cdot\nabla_x\overline f^{2,\eta}(x,y,v)\psi^*(v)d\nu(v)dy.
\end{equation}
The remainder term $r^{\eps,\eta}$ satisfies the following estimate:
\begin{equation}\label{eq:estr1} \begin{array}{rcl}
||r^{\eps,\eta}||_{L^2_{xv}}&\leq& \eps^2||\overline{v\cdot\nabla_xf^{3,\eta}\psi^{\eta*}}||_{L^2_xL^\infty_yL^2_v}+ \eps^3||v\cdot\nabla_xf^{3,\eta}||_{L^2_xL^\infty_yL^2_v}\\[2mm] && \quad +\eps^2||f^{2,\eta}||_{L^2_xL^\infty_yL^2_v}+\eps^3||f^{3,\eta}||_{L^2_xL^\infty_yL^2_v}. \end{array}
\end{equation}
\end{proposition}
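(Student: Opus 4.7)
The strategy is the standard two-scale Hilbert expansion combined with the a priori estimate of Proposition \ref{prop:apriori}. First I would treat $y=\frac{\eta x}{\eps}=\frac{x}{\alpha}$ as an auxiliary variable and use the chain rule $v\cdot\nabla_x \mapsto v\cdot\nabla_x + \frac{\eta}{\eps}v\cdot\nabla_y$ to rewrite (\ref{eq:trepsalpha}) in two-scale form
\begin{equation*}
v\cdot\nabla_x f + \frac{1}{\eps}T^\eta f + \eps f = \eps S, \qquad T^\eta = \eta\, v\cdot\nabla_y+Q.
\end{equation*}
Writing the ansatz as $g=f^{0,\eta}+\eps f^{1,\eta}+\eps^2 f^{2,\eta}+\eps^3 f^{3,\eta}$ with $f^{0,\eta}=n^{0,\eta}(x)\psi^\eta(y,v)$ and collecting powers of $\eps$ yields the cascade
\begin{equation*}
T^\eta f^{0,\eta}=0,\; T^\eta f^{1,\eta}=-v\cdot\nabla_x f^{0,\eta},\; T^\eta f^{2,\eta}=S-f^{0,\eta}-v\cdot\nabla_x f^{1,\eta},\; T^\eta f^{3,\eta}=-f^{1,\eta}-v\cdot\nabla_x f^{2,\eta}.
\end{equation*}

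At each level I would invert $T^\eta$ via (\ref{eq:Tinv}), which is well defined provided the right-hand side integrates to zero against $\psi^*$ over $Y\times V$ (since $\psi^*$ is $y$-independent and obeys $Q^*\psi^*=0$, it lies in $\ker T^{\eta*}$). The order $\eps^{-1}$ equation is satisfied by (\ref{eq:psieta}). The order $\eps^0$ solvability condition is the no-drift identity $\int_{Y\times V} v\,\psi^\eta\psi^*\,dy\,d\nu(v)=0$, which follows from the parities $\psi^\eta(y,-v)=\psi^\eta(y,v)$ and $\psi^*(-v)=\psi^*(v)$ in H4; inverting then gives $f^{1,\eta}=-\chi^\eta\cdot\nabla_xn^{0,\eta}+n^{1,\eta}(x)\psi^\eta$ with $\chi^\eta=T^{\eta-1}(v\psi^\eta)$ and $n^{1,\eta}$ a still-free scalar. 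At order $\eps^1$, solvability of the equation for $f^{2,\eta}$, after using $\int \psi^\eta\psi^*\,dy\,d\nu(v)=1$ and no-drift once more to kill the contribution of $n^{1,\eta}$, produces exactly the diffusion equation (\ref{eq:difneta}) for $n^{0,\eta}$; positivity of the effective diffusion matrix follows from standard dissipation estimates for $T^\eta$ (extending those for $Q$ collected in the Appendix), so the equation is well-posed in $H^k$ thanks to H2. A further inversion produces $f^{2,\eta}$ in the stated split form. Repeating the procedure at order $\eps^2$ yields (\ref{eq:difn1eta}) for $n^{1,\eta}$ and then $f^{3,\eta}$ by a final inversion.

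To close the proof I would set $r^{\eps,\eta}=f^{\eps,\eta}-g\big|_{y=\eta x/\eps}$. By construction the powers $\eps^{-1},\ldots,\eps^2$ cancel identically and a direct computation leaves
\begin{equation*}
\Big(v\cdot\nabla_x+\eps+\tfrac{1}{\eps}Q_\alpha\Big)r^{\eps,\eta} = -\eps^3\,v\cdot\nabla_x f^{3,\eta} - \eps^3 f^{2,\eta} - \eps^4 f^{3,\eta}.
\end{equation*}
Applying Proposition \ref{prop:apriori} to this equation (the displayed right-hand side playing the role of $\eps S_\alpha$) and using $\|g(x,\eta x/\eps,v)\|_{L^2_{xv}}\leq\|g\|_{L^2_xL^\infty_yL^2_v}$ to convert cell-norms into physical-scale norms produces precisely (\ref{eq:estr1}).

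The main obstacle is not the algebra of the cascade but rather two technical points: confirming that the effective elliptic operator governing $n^{0,\eta},n^{1,\eta}$ is uniformly elliptic in $\eta$ so that $H^k$-solutions with sufficient $x$-regularity exist under H2; and verifying that the successive cell correctors $\chi^\eta,\bar f^{2,\eta},f^{3,\eta}$ inherit enough smoothness in $y$ (through H3 and H4) for the mixed-norm $L^2_xL^\infty_yL^2_v$ quantities entering (\ref{eq:estr1}) to be finite. Both reduce to regularity estimates on $T^{\eta-1}$ that are carried over, with only mild adaptation, from \cite{NBAPuVo} and collected in the Appendix.
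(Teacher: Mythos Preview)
Your proposal is correct and follows essentially the same approach as the paper: the two-scale Hilbert ansatz, the same cascade of $T^\eta$-equations with solvability conditions yielding (\ref{eq:difneta}) and (\ref{eq:difn1eta}), and then the a priori estimate of Proposition \ref{prop:apriori} applied to the remainder equation (which the paper writes in the equivalent divided-by-$\eps$ form). Your identification of the two residual technical points---ellipticity of the effective diffusion operator and $y$-regularity of the correctors---is also accurate, and the paper indeed defers these to later sections and the Appendix.
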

\begin{proof}
We propose the following ansatz for $f^{\eps,\eta}$ :
$$
f^{\eps,\eta}= f^{0,\eta}(x,\frac{\eta x}{\eps},v)+\eps f^{1,\eta}(x,\frac{\eta x}{\eps},v)+\eps^2 f^{2,\eta}(x,\frac{\eta x}{\eps},v)+\eps^3 f^{3,\eta}(x,\frac{\eta x}{\eps},v)+r^{\eps,\eta}(x,v).
$$
We plug the ansatz into the transport equation and equate like powers of $\eps$ to obtain the following sequence of equations. At the leading order, we obtain
$$
T^\eta f^{0,\eta}=0\quad \mbox{which yields that }\quad f^{0,\eta}=n^{0,\eta}(x)\psi^\eta(y,v).
$$
We recall that $T^\eta=\eta v\cdot\nabla_y+Q$. At the next order, $f^{1,\eta}$ satisfies
$$
T^\eta f^{1,\eta}=-v\cdot\nabla_xf^{0,\eta}(x,y,v)=-\psi^\eta(y,v) v\cdot\nabla_xn^{0,\eta}(x).
$$
We can then rewrite $f^{1,\eta}$ as
$$
f^{1,\eta}(x,y,v)=- \chi^\eta(y,v)\cdot\nabla_xn^{0,\eta}(x) + n^{1,\eta}(x)\psi^\eta(y,v)
$$
by defining $\chi^\eta=T^{\eta,-1}(v\psi^\eta)$, the unique solution to 
$$
T^\eta \chi^\eta=v\psi^\eta\quad \mbox{such that}\quad \int_V\int_Y\chi^\eta (y,v)\psi^*(v)dyd\nu(v)=0.
$$
We can apply the inverse operator $T^{\eta,-1}$ defined in \eqref{eq:Tinv} to the source term $v\psi^\eta$ because the following standard {\em no-drift} condition is satisfied:
$$
\int_V\int_Y v\psi^\eta(y,v)\psi^{*}(v)d\nu(v)dy=0,
$$
thanks to assumption (H4).

The second-order equation reads 
$$
T^\eta f^{2,\eta}=-f^{0,\eta}(x,y,v)+S(x,y,v)-v\cdot\nabla_xf^{1,\eta}(x,y,v)
$$
and the compatibility equation that the right-hand side must satisfy to be in the domain of definition of $T^{\eta,-1}$ gives the diffusion equation (\ref{eq:difneta}) for $n^{0,\eta}$.

Introduce $\overline f^{2,\eta}$ the solution to 
$$
T^\eta \overline f^{2,\eta}=-f^{0,\eta}(x,y,v)+S(x,y,v)-v\cdot\nabla_x(- \chi^\eta(y,v)\cdot\nabla_xn^{0,\eta}(x)).
$$
The third-order  equation is the following
$$
T^\eta f^{3,\eta}=-v\cdot\nabla_xf^{2,\eta}(x,y,v)-f^{1,\eta}(x,y,v).
$$
and the corresponding compatibility equation gives a diffusion equation for $n^{1,\eta}$
$$
n^{1,\eta}(x)-\nabla_x\cdot\int_{Y\times V}\hspace{-.3cm} v\chi^\eta(y,v)\psi^*(v)d\nu(v)dy\nabla_xn^{1,\eta}(x)=-\int_{Y\times V}\hspace{-.3cm}v\cdot\nabla_x\overline f^{2,\eta}(x,y,v)\psi^*(v)d\nu(v)dy
$$
since
$$
\int_Y\int_V\psi^*(v) \chi^\eta(y,v)\cdot\nabla_xn^{0,\eta}(x)=0.
$$
With the above expressions, we verify that remainder term now satisfies that:
$$
r^{\eps,\eta}+\frac{1}{\eps}v\cdot\nabla_xr^{\eps,\eta}+\frac{1}{\eps^2}Q(r^{\eps,\eta})=-\eps^2f^{2,\eta}(x,\frac{\eta x}{\eps},v)-\eps^3f^{3,\eta}(x,\frac{\eta x}{\eps},v)-\eps^2 v\cdot\nabla_xf^{3,\eta}(x,\frac{\eta x}{\eps},v).
$$
Thanks to the estimate (\ref{eq:ape2}) proved in the previous section, we obtain (\ref{eq:estr1}). 
\end{proof}
In order to analyze the behavior of the terms $f^{0,\eta}$, $f^{1,\eta}$, $f^{2,\eta}$ and $f^{3,\eta}$ as $\eta\to0$, we need to study some auxiliary equations. Such studies are conducted in the following section.
\section{Expansion of the auxiliary functions}
\label{sec:aux}
This section is devoted to the asymptotic expansion as $\eta\to0$ of the three cell functions $\psi^\eta$, $\chi^\eta$, and $\chi^{\eta *}$.

\subsection{Expansion of $\psi^\eta$}

Recall that $\psi^{\eta}$ satisfies 
$$
\eta v\cdot\nabla_y \psi^\eta +Q(\psi^\eta)=0\quad \mbox{and}\quad \int_Y\int_V \psi^\eta(y,v)\psi^*(v)d\nu(v)dy=1.
$$
We prove the following expansion

\begin{proposition} The function  $\psi^\eta$, solution to (\ref{eq:psieta}) satisfies
\begin{equation}\label{eq:exp-psi-eta}
\psi^\eta(y,v)=\psi^0(y,v)+\eta\psi^1(y,v)+\eta^2\psi^2(y,v)+\tilde r^\eta_{p}(y,v),\quad||\tilde r^\eta_{p}||_{H^k(Y,L^2( V))}\leq C\eta^3,
\end{equation}
with
\begin{equation}\label{eq:psis1}
\begin{array}{rcll}
\psi^0(y,v)&=&\rho^0(y)\psi(y,v),\\
 \psi^1(y,v)&=&Q^{-1}(-v\cdot\nabla_y\psi^0(y,v))\\
\psi^2(y,v)&=&Q^{-1}(-v\cdot\nabla_y\psi^1(y,v))+\rho^2(y)\psi(y),\end{array}
\end{equation}
where
\begin{eqnarray}
&& L(\rho^0)=0 \mbox{ with }\int \rho^0(y)dy=1\\
&& \dps \rho^2= -L^{-1}\left(\int_V\psi^*(v)v\cdot\nabla_y(Q^{-1}(-v\cdot\nabla_y(Q^{-1}(-v\cdot\nabla_y\psi^1(y,v)))))d\nu(v)\right).
\end{eqnarray}
The operator $L$ and its inverse defined on functions in $L^2(Y)$ with vanishing average over $Y$ are defined in Proposition \ref{propl} in the appendix.
From this expansion, we deduce that:
$$
||\psi^\eta||_{L^2_vL^\infty_y}\leq C\quad\mbox{and}\quad ||\int_Vv\psi^\eta\psi^* d\nu(v)||_{L^\infty_y}\leq C\eta.
$$
Moreover, we find that $\psi^0$ and $\psi^2$ are even functions and $\psi^1$ is an odd function in the variable $v$.
\end{proposition}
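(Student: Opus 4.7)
The plan is to substitute the ansatz $\psi^\eta = \psi^0 + \eta\psi^1 + \eta^2\psi^2 + \tilde r^\eta_p$ into \eqref{eq:psieta} and identify coefficients of each power of $\eta$. At order $\eta^0$, $Q\psi^0 = 0$ together with the Krein--Rutman characterization of $\ker Q$ forces $\psi^0(y,v) = \rho^0(y)\psi(y,v)$. At order $\eta^1$, one solves $Q\psi^1 = -v\cdot\nabla_y\psi^0$; the Fredholm compatibility $\int_V v\cdot\nabla_y\psi^0\,\psi^*\,d\nu = 0$ holds automatically because $\int_V v\psi\psi^*\,d\nu = 0$ by the parities of $\psi,\psi^*$ imposed by (H4), so $\psi^1 = Q^{-1}(-v\cdot\nabla_y\psi^0)$ is well defined. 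At order $\eta^2$, the compatibility condition $\int_V \psi^*\,v\cdot\nabla_y\psi^1\,d\nu = 0$ reduces to the elliptic equation $L(\rho^0) = 0$; combined with the normalization $\int_Y\rho^0 = 1$ inherited from $\int_{Y\times V}\psi^\eta\psi^*\,d\nu\,dy = 1$ and $\int_V\psi\psi^*\,d\nu = 1$, Proposition~\ref{propl} uniquely determines $\rho^0$. One then sets $\psi^2 = Q^{-1}(-v\cdot\nabla_y\psi^1) + \rho^2(y)\psi(y,v)$ with $\rho^2$ still to be fixed.

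The coefficient $\rho^2$ is determined by pushing one order further. The compatibility at order $\eta^3$ (for the implicit equation $Q\psi^3 = -v\cdot\nabla_y\psi^2$) is automatic by parity: $\psi^1$ is odd in $v$, so $Q^{-1}(-v\cdot\nabla_y\psi^1)$ is even and $v\cdot\nabla_y$ of it is odd, hence integrates against $\psi^*$ to zero. Writing $\psi^3 = Q^{-1}(-v\cdot\nabla_y\psi^2) + \rho^3\psi$ and imposing the compatibility at order $\eta^4$ yields, after the cancellations from (H4) eliminate the $\rho^3\psi$ contribution,
$$
L(\rho^2) = -\int_V \psi^*(v)\,v\cdot\nabla_y Q^{-1}\bigl(-v\cdot\nabla_y Q^{-1}(-v\cdot\nabla_y\psi^1)\bigr)\,d\nu(v),
$$
whose right-hand side has zero $Y$-average (it is itself a $y$-divergence), so Proposition~\ref{propl} delivers the formula claimed in \eqref{eq:psis1}. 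In particular $\int_Y\rho^2 = 0$, so the normalization $\int_{Y\times V}\psi^\eta\psi^*\,d\nu\,dy = 1$ is preserved through order $\eta^2$ (the $\eta^1$ term vanishes because $\int_V\psi^1\psi^*\,d\nu = 0$ by construction of $Q^{-1}$).

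With these choices, the remainder satisfies $T^\eta\tilde r^\eta_p = -\eta^3 v\cdot\nabla_y\psi^2$. Both the global compatibility $\int_{Y\times V} v\cdot\nabla_y\psi^2\,\psi^*\,d\nu\,dy = 0$ (from $y$-periodicity) and the sharper local-in-$y$ identity $\int_V v\psi^2\psi^*\,d\nu = 0$ (again by the parity structure) hold, which places the right-hand side in the good subspace for inversion of $T^\eta$. Invoking the $\eta$-uniform estimate on $T^{\eta-1}$ established in the appendix (generalizing the corresponding estimate of \cite{NBAPuVo}), together with the smoothness of $\sigma,\Sigma$ from (H3) and the elliptic regularity for $\rho^0,\rho^2$ supplied by Proposition~\ref{propl}, we obtain $\|\tilde r^\eta_p\|_{H^k(Y,L^2(V))}\le C\eta^3$. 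I expect this step to be the main obstacle: the uniformity in $\eta$ of $T^{\eta-1}$ on the correct subspace is exactly where the no-drift conditions encoded in (H4) are indispensable, and the $H^k$ control requires a careful bootstrap coupling the cell-wise Fredholm theory for $Q$ with the macroscopic ellipticity of $L$. The advertised bounds then follow at once: $\|\psi^\eta\|_{L^2_v L^\infty_y}\le C$ by the triangle inequality, and $\|\int_V v\psi^\eta\psi^*\,d\nu\|_{L^\infty_y}\le C\eta$ because $\int_V v\psi^0\psi^*\,d\nu = \rho^0\int_V v\psi\psi^*\,d\nu = 0$ by (H4). Finally, the parities of $\psi^0,\psi^1,\psi^2$ in $v$ follow by induction, using that $Q^{-1}$ preserves the even/odd-in-$v$ subspaces and that $v\cdot\nabla_y$ swaps them.
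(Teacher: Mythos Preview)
Your derivation of $\psi^0,\psi^1,\psi^2$ and of the equations $L(\rho^0)=0$, $L(\rho^2)=\ldots$ matches the paper's argument step for step, and your parity bookkeeping is correct. The gap is in the remainder estimate.

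You stop the expansion at $\psi^2$, so that $T^\eta\tilde r^\eta_p=-\eta^3 v\cdot\nabla_y\psi^2$, and then invoke an ``$\eta$-uniform estimate on $T^{\eta-1}$.'' No such estimate is available here. The appendix provides only
\[
\|T^{\eta-1}g\|_{H^k}\le \frac{C}{\eta^2}\|g\|_{H^k}\quad\text{(est-t)},\qquad
\|T^{\eta-1}g\|_{H^k}\le \frac{C}{\eta}\|g\|_{H^k}\ \text{ if }\int_V g\,\psi^*\,d\nu=0\quad\text{(est-t-2)},
\]
and these losses are intrinsic to the singular limit $\eta\to0$: at $\eta=0$ the operator $T^\eta$ degenerates to $Q$, whose kernel is infinite-dimensional (all $\rho(y)\psi(y,v)$), so $T^{\eta-1}$ cannot be bounded uniformly on the subspace you describe. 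With your right-hand side (which does satisfy $\int_V(\cdot)\psi^*\,d\nu=0$), the best you get from (est-t-2) is $\|\tilde r^\eta_p\|_{H^k}\le C\eta^2$, one power short of the claim.

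The paper's remedy is to push the formal expansion two orders further: define auxiliary $\psi^3=Q^{-1}(-v\cdot\nabla_y\psi^2)$ and $\psi^4=Q^{-1}(-v\cdot\nabla_y\psi^3)$ (the compatibility for $\psi^4$ is exactly what fixes $\rho^2$, as you already noted), set $r^\eta_p=\psi^\eta-\sum_{j=0}^4\eta^j\psi^j$, and obtain $T^\eta r^\eta_p=-\eta^5 v\cdot\nabla_y\psi^4$. Now the crude estimate (est-t) gives $\|r^\eta_p\|_{H^k}\le C\eta^{-2}\cdot\eta^5=C\eta^3$, and one recovers the stated remainder by writing $\tilde r^\eta_p=\eta^3\psi^3+\eta^4\psi^4+r^\eta_p$. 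In short: you must overshoot the expansion by two orders to compensate the $\eta^{-2}$ loss in inverting $T^\eta$.
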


\begin{proof}
We plug the ansatz into the equation for $\psi^\eta$ and equate like powers of $\eta$. The resulting series of equations is the following. First we have
$$
Q(\psi^0)=0,
$$
which implies that $\psi^0=\rho^0(y)\psi(y,v)$. Next, we obtain 
$$
Q(\psi^1)=-v\cdot\nabla_y(\rho^0(y)\psi(y,v)).
$$
Since thanks to assumption {(H4)} the no-drift condition $\int_V v\psi(y,v)\psi^*(v) d\nu(v)=0$ is satisfied, the compatibility condition for the previous equation is fulfilled and we can solve the equation for $\psi^1$ to obtain
$$
\psi^1=Q^{-1}(-v\cdot\nabla_y(\rho^0(y)\psi(y,v))).
$$

The compatibility condition for the third equation gives the equation for $\rho^0$
since for
$$
Q(\psi^2)=-v\cdot\nabla_y (\psi^1(y,v)) \quad\mbox{i.e.,}\quad \psi^2=Q^{-1}(-v\cdot\nabla_y\psi^1(y,v))+\rho^2(y)\psi(y,v)
$$
to have a solution, we need
$$
\int_V \psi^*(v)v\cdot\nabla_y (\psi^1(y,v))d\nu(v)=0.
$$
This relation is satisfied if 
$$
L(\rho^0)=0\quad \mbox{with}\quad L(\rho)=-\int_V \psi^*(v) v\cdot\nabla_y(Q^{-1}(v\cdot\nabla_y(\psi(y,v)\rho(y))))d\nu(v).
$$
The precise study of $L$ and its adjoint $L^*$ defined by
$$
L^*(\rho)=-\int_V \psi(y,v) v\cdot\nabla_y({Q^*}^{-1}(v\cdot\nabla_y(\psi^*(v)\rho))d\nu(v)
$$
is conducted in the appendix.
The next equation is
$$
Q(\psi^3)=-v\cdot\nabla_y(\psi^2).
$$
The corresponding compatibility equation is satisfied since $\psi^2$ is an even function of $v$.
Then we write
$$
Q(\psi^4)=-v\cdot\nabla_y(\psi^3).
$$
Its compatibility condition  gives the following equation for $\rho^2$:
\begin{equation}\label{eq:ro2}
L(\rho^2)=-\int_V\psi^*(v)v\cdot\nabla_y(Q^{-1}(-v\cdot\nabla_y(Q^{-1}(-v\cdot\nabla_y\psi^1(y,v)))))d\nu(v).
\end{equation}
Recall that $L^*(1)=0$. By the Fredholm alternative, equation (\ref{eq:ro2}) thus admits a unique mean zero solution since
$$
\int_Y\int_V\psi^*(v)v\cdot\nabla_y(Q^{-1}(-v\cdot\nabla_y(Q^{-1}(-v\cdot\nabla_y\psi^1(y,v)))))d\nu(v)dy=0.
$$

The remainder term $r_{p}^\eta$ satisfies
$$
\eta v\cdot\nabla r_p^\eta+Q(r_p^\eta)=-\eta^{5}v\cdot\nabla_y(\psi^4)
$$
and if we impose 
$
\int_y \rho^0(y)dy=1$  and $\int _Y \rho^2(y)dy=0,
$
we get
$$
\int_Y\int_V  r_{p}^\eta(y,v) \psi^*(v)d\nu(v)dy= 0.
$$
Then using the technical regularity estimate (\ref{est-t}) presented and proved in the appendix below, we obtain
$$
||r_{p}^\eta||_{H^k(Y),L^2( V))}\leq  \eta^3||\psi^4||_{H^{k+1}(Y,L^2(V))}.
$$
The remainder involved in the proposition is given by
$
\tilde r^\eta_p=\eta^3\psi^3+\eta^4\psi^4+r^\eta_p
$
and then satisfies
$$
||\tilde r_{p}^\eta||_{H^k(Y),L^2( V))}\leq  \eta^3.
$$
By taking $k$ large enough, we get the desired $L^\infty_y$ bounds.
\end{proof}
\subsection{Expansion of $\chi^\eta$}

We want to expand $\chi^\eta$ satisfying
$$
\eta v\cdot\nabla_y \chi^\eta+Q(\chi^\eta)=v\psi^\eta\quad\mbox{ and }\quad \int_Y\int_V  \chi^\eta(y,v)\psi^*(v) d\nu(v)dy=0.
$$

\begin{proposition}
We prove the following expansion
$$
\chi^\eta(y,v)= \frac{1}{\eta}\theta^{-1}(y)\psi(y,v)+ Q^{-1}(v\psi)\rho^0(y)-Q^{-1}(v\cdot\nabla_y(\theta^{-1}\psi(y,v)))+\tilde r^\eta_{\chi}(y,v),\quad
$$
with $||\tilde r^\eta_{\chi}||_{H^k(Y,L^2(V))}\leq C\eta$,
where we have
\begin{eqnarray*}
&&L(\rho^0)=0\quad \mbox{with}\quad \int_Y\rho^0(y)dy=1\\
&& L\theta^{-1}=\int_V(vQ^{-1}(-v\cdot\nabla_y\rho^0(y)\psi(y,v))\psi^*(v)-v\psi^*(v)\cdot\nabla_y(Q^{-1}(v\psi)(y,v)\rho^0(y)))d\nu(v).
\end{eqnarray*}
Moreover
\begin{equation}
\label{eq:intchi}
||\chi^\eta||_{L^2_vL^\infty_y}\leq \frac{C}{\eta}\quad\mbox{and}\quad||\int_Vv\chi^{\eta}(y,v)\psi^*(v)d\nu(v)||_{L^\infty_y}\leq C.
\end{equation}
\end{proposition}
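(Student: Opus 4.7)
The plan is to mimic the Hilbert-expansion argument used just above for $\psi^\eta$, with the twist that the expansion of $\chi^\eta$ must start at order $\eta^{-1}$: since the source $v\psi^\eta$ is $O(1)$ while $Q$ has a nontrivial kernel, the only way to balance the leading order is to place a kernel element at order $\eta^{-1}$. I would therefore posit
\begin{equation*}
\chi^\eta = \tfrac{1}{\eta}\chi^{-1} + \chi^0 + \eta\chi^1 + \eta^2\chi^2 + r^\eta_\chi,
\end{equation*}
substitute together with the expansion $\psi^\eta = \psi^0 + \eta\psi^1 + \eta^2\psi^2 + \tilde r^\eta_p$ already established, and equate like powers of $\eta$. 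At order $\eta^{-1}$, $Q\chi^{-1}=0$ forces $\chi^{-1}(y,v) = \theta^{-1}(y)\psi(y,v)$. At order $\eta^0$, $Q\chi^0 = v\rho^0\psi - v\cdot\nabla_y(\theta^{-1}\psi)$; the solvability condition against $\psi^*$ reduces pointwise in $y$ to $\int_V v\psi\psi^*\,d\nu(v)=0$, which holds by the no-drift condition in (H4), so
\begin{equation*}
\chi^0 = Q^{-1}(v\psi)\rho^0 - Q^{-1}\bigl(v\cdot\nabla_y(\theta^{-1}\psi)\bigr) + \beta^0(y)\psi(y,v),
\end{equation*}
with $\beta^0$ to be fixed later through the global normalization.

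The function $\theta^{-1}$ is determined one order higher. At order $\eta$, $Q\chi^1 = v\psi^1 - v\cdot\nabla_y\chi^0$, and testing against $\psi^*$ yields, after using $\psi^1 = Q^{-1}(-v\cdot\nabla_y(\rho^0\psi))$ and substituting the expression for $\chi^0$, the equation
\begin{equation*}
L\theta^{-1} = \int_V \Bigl(vQ^{-1}\bigl(-v\cdot\nabla_y(\rho^0\psi)\bigr)\psi^* - v\psi^*\cdot\nabla_y\bigl(Q^{-1}(v\psi)\rho^0\bigr)\Bigr) d\nu(v),
\end{equation*}
with the $\beta^0$-contribution dropping out thanks again to the pointwise identity $\int v\psi\psi^* d\nu = 0$. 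By the Fredholm alternative for $L$ (recall $L^*(1)=0$ because $\psi^*$ is $y$-independent), this equation is solvable iff the right-hand side has zero $Y$-average, which I would verify by integrating in $y$ and using $Y$-periodicity. A unique $\theta^{-1}$ is singled out by $\int_Y \theta^{-1}\,dy = 0$, which in turn cancels the $\eta^{-1}$ contribution to the normalization $\int_{Y\times V}\chi^\eta\psi^*\,dy\,d\nu(v) = 0$; $\beta^0$ and the analogous constant at order $\eta^2$ are then determined exactly as $\rho^0$ and $\rho^2$ were in the $\psi^\eta$ expansion, requiring also a solvability check for $\chi^2$ of the same nature as (\ref{eq:ro2}).

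Once $\chi^{-1},\chi^0,\chi^1,\chi^2$ are built, the residual $r^\eta_\chi$ solves $(\eta v\cdot\nabla_y+Q)r^\eta_\chi = O(\eta^3)$ in $H^k(Y,L^2(V))$, and the appendix estimate (\ref{est-t}) used for $\psi^\eta$ (with the $\eta^{-2}$ amplification inherent to inverting $T^\eta$) gives $\|r^\eta_\chi\|_{H^k} \leq C\eta$; absorbing $\eta\chi^1 + \eta^2\chi^2$ into $\tilde r^\eta_\chi$ preserves this order. The $L^2_vL^\infty_y$ bound follows by Sobolev embedding for $k$ large, and the drift bound is automatic: the $\eta^{-1}$ piece contributes $\eta^{-1}\theta^{-1}(y)\int v\psi\psi^*d\nu(v) = 0$ pointwise in $y$ by (H4), so $\int v\chi^\eta\psi^* d\nu(v)$ is of order $1$. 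The main obstacle I anticipate is the identification step: checking that the third-order solvability condition really produces the operator $L$ acting on $\theta^{-1}$ with the precise right-hand side displayed in the proposition, and showing that this right-hand side has vanishing $Y$-average. This is a careful integration-by-parts calculation in $y$ that relies on $Y$-periodicity and on the parity of $Q^{-1}$ on even/odd functions of $v$ guaranteed by (H4).
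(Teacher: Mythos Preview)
Your proposal is correct and follows the same Hilbert-expansion strategy as the paper: start the ansatz at order $\eta^{-1}$ with a kernel element $\theta^{-1}(y)\psi(y,v)$, match orders, determine $\theta^{-1}$ from the solvability condition at order $\eta$, push to order $\eta^2$, and control the remainder via the appendix estimate (\ref{est-t}) applied to a right-hand side of size $O(\eta^3)$.

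Two minor differences in execution are worth noting. First, the paper leans systematically on $(y,v)$-parity inherited from (H4) rather than just $Y$-periodicity: it shows the right-hand side of the equation for $\theta^{-1}$ has zero $Y$-average because $\psi^1$ is \emph{odd in $y$} (not merely through an integration by parts), and it deduces that $\theta^{-1}$ itself is odd in $y$, which automatically gives $\int_{Y\times V}\chi^{-1}\psi^*=0$ without having to impose it. Your plan to fix $\int_Y\theta^{-1}=0$ by hand works too, but you should be aware that the vanishing $Y$-average of the first term $\int_V v\psi^1\psi^*\,d\nu$ genuinely requires the $y$-parity (since $Q$ depends on $y$, you cannot pull $\nabla_y$ through $Q^{-1}$ to write it as a divergence). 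Second, your extra kernel term $\beta^0(y)\psi$ at order $0$ is unnecessary: because $Q^{-1}$ produces functions with $\int_V(\cdot)\psi^*=0$ pointwise in $y$, the particular solution already satisfies the normalization, and the paper simply omits this term. None of this affects the validity of your argument.
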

\begin{proof}In view of estimate (\ref{est-t-2}) obtained in the appendix below, we expect the expansion of $\chi^\eta$ to start at the order $\dps\frac{1}{\eta}$. Thus, we begin with
$$
Q(\chi^{-1})=0\quad\mbox{or}\quad\chi^{-1}(y,v)=\theta^{-1}(y)\psi(y,v),
$$
Then at the order $O(\eta^0)$ we get
$$
Q(\chi^0)=v\psi^0-v\cdot\nabla_y\chi^{-1}.
$$
Thanks to the no-drift condition and (H4), the compatibility condition for this equation  is satisfied.
Next, we write
$$
Q(\chi^1)= v\psi^1-v\cdot\nabla_y\chi^0\quad \mbox{ i.e., }\quad \chi^1=Q^{-1}(v\psi^1-v\cdot\nabla_y\chi^0)(y,v)+\theta^1(y)\psi(y,v).
$$
The compatibility condition for the equation giving $\chi^1$ gives the expression of $\theta^{-1}$ since it satisfies the following elliptic equation
$$
L(\theta^{-1})=\int_V(v\psi^1(y,v)\psi^*(v)-v\psi^*(v)\cdot\nabla_y(\chi(y,v)\rho^0(y)))d\nu(v).
$$
This equation has a solution since
$$
\int_Y\int_V(v\psi^1(y,v)\psi^*(v)-v\psi^*(v)\cdot\nabla_y(\chi(y,v)\rho^0(y)))d\nu(v)dy=0
$$
because $\psi^1$ defined in \eqref{eq:psis1} is odd in $y$ as a gradient of an even function and the second contribution can be written as a divergence in $y$ and hence averages to $0$ over the cell $Y$.
Note that $\theta^{-1}$ is odd in the $y$ variable and that therefore 
$$\int_Y\int_V  \chi^{-1}(y,v)\psi^*(v)d\nu(v)dy=0.$$

Finally, we write
$$
Q(\chi^2)=v\psi^2-v\cdot\nabla_y\chi^1
$$
for which the compatibility condition holds since $\psi^2$ is even with respect to $v$ and $\chi^1$ is even. Indeed, $\psi^1$ is odd with repect to $v$   (even with respect to $(y,v)$) and hence so is $\chi^0$.

The remainder term satisfies
$$
\eta v\cdot \nabla_y r_{\chi}^\eta+Q(r_\chi^\eta)=- \eta^3v\cdot\nabla_y\chi^2+v r_p^\eta .
$$
We chose $\theta^1$ such that $\int_V(-\eta^3v\cdot\nabla_y\chi^2(y,v)+v r_p^\eta (y,v))\psi^*(v)d\nu(v)=0$ for any $y \in Y$. Note that this is possible since
$
\int_V\int_Y(-\eta^3v\cdot\nabla_y\chi^2+v r_p^\eta) \psi^*d\nu(v) dy=0
$
because $r^\eta_p$  is even with respect to  $(y,v)$ whereas $\chi^2$ is odd.
The fact that $\chi^2$ and $vr_p^\eta$ are odd with respect to $(y,v)$ implies, thanks to Proposition \ref{propl}, that $\theta^1$ is odd and  therefore $\int_Y\int_V  \chi^{1}(y,v)\psi^*(v)d\nu(v)dy=0$,
which finally gives
$$
\int_Y\int_V r_\chi^{\eta}(y,v)\psi^*(v)d\nu(v)dy=0.
$$
By using estimate (\ref{est-t}) and since $||- \eta^3v\cdot\nabla_y\chi^2+vr_p^\eta ||_{H^k(Y,L^2(V))}\leq C\eta^2$, we have that
$$
||r_\chi^{\eta}||_{H^k(Y,L^2(V))}\leq C\eta.
$$
This concludes the proof of the result.
\end{proof}

\subsection{Expansion of $\chi^{\eta*}$}
\begin{proposition}Let $\chi^{\eta *}$ be the solution of 
$$
-\eta v\cdot\nabla_y\chi^{\eta*}+Q^*(\chi^{\eta*})=v\psi^*.
$$
The following expansion holds
$$\chi^{\eta*}=\frac{1}{\eta} \theta^{*-1}(y)\psi^*(v)+\chi^{*0}(y,v)+\eta\chi^{*1}(y,v)+\tilde r^\eta_{\chi^*}(y,v)
$$
with $\dps ||\tilde r^\eta_{\chi^*}||_{H^k(Y,L^2(V))}\leq C\eta^2$, where
\begin{eqnarray}
\chi^{*0}(y,v)&=&Q^{*-1}(v\psi^*+v\cdot\nabla_y\theta^{*-1}\psi^*)(y,v)+\theta^{*0}(y)\psi^*(v)\\
L^*(\theta^{*-1})&=&\int_V \psi(y,v)v\cdot\nabla_y Q^{*-1}(v\psi^*(v))d\nu(v),\\
L^*(\theta^{*0})&=& \int _V \psi(y,v)v\cdot\nabla_yQ^{*-1}(v\cdot\nabla_yQ^{*-1}(v\psi^*+v\cdot\nabla_y\chi^{*-1}))d\nu(v),\\
\chi^{*1}(y,v)&=&Q^{*-1}(v\cdot\nabla_y\chi^{*0}(y,v))+\theta^{*1}(y)\psi^*(y,v),\\
L^*(\theta^{*1})&=& \int _V \psi(y,v)v\cdot\nabla_yQ^{*-1}(v\cdot\nabla_yQ^{*-1}(\chi^{*0}))d\nu(v).
\end{eqnarray}

\end{proposition}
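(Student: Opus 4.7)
The plan is to mirror the argument given for $\chi^\eta$ in the previous subsection, the only structural difference being that the right-hand side $v\psi^*$ is now independent of $\eta$ (since $\psi^{*,\eta}=\psi^*$ by (H4)) and that we work throughout with the adjoint operators $Q^*$, $Q^{*-1}$, $L^*$. I plug the ansatz
$$\chi^{\eta*}=\eta^{-1}\chi^{*-1}+\chi^{*0}+\eta\chi^{*1}+\eta^2\chi^{*2}+r^\eta$$
into $(-\eta v\cdot\nabla_y+Q^*)\chi^{\eta*}=v\psi^*$ and match like powers of $\eta$. Order $\eta^{-1}$ gives $Q^*\chi^{*-1}=0$, hence $\chi^{*-1}=\theta^{*-1}(y)\psi^*(v)$. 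Order $\eta^0$ gives $Q^*\chi^{*0}=v\psi^*+v\cdot\nabla_y\chi^{*-1}=v\psi^*+\psi^*\,v\cdot\nabla_y\theta^{*-1}$. The pointwise-in-$y$ Fredholm compatibility $\int_V\psi(y,v)[\,\cdot\,]d\nu=0$ is automatic thanks to the no-drift identity $\int_V v\psi^*\psi\,d\nu=0$ supplied by (H4); applying $Q^{*-1}$ yields $\chi^{*0}=Q^{*-1}\!\left(v\psi^*+v\cdot\nabla_y(\theta^{*-1}\psi^*)\right)+\theta^{*0}(y)\psi^*(v)$ with $\theta^{*0}$ still free.

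The closure equation for $\theta^{*-1}$ appears one step later. Order $\eta^1$ requires $Q^*\chi^{*1}=v\cdot\nabla_y\chi^{*0}$, whose compatibility $\int_V\psi\,v\cdot\nabla_y\chi^{*0}\,d\nu=0$ reduces, once the free $\theta^{*0}$ contribution is killed by no-drift, to precisely $L^*\theta^{*-1}=\int_V\psi\,v\cdot\nabla_y Q^{*-1}(v\psi^*)\,d\nu$ as displayed in the proposition. Proposition \ref{propl} provides a unique mean-zero solution once this right-hand side averages to zero on $Y$, which I verify by the parity bookkeeping inherited from (H4): $Q^{*-1}$ preserves even/odd subspaces in $v$, so $Q^{*-1}(v\psi^*)$ is odd in $v$; combined with $\psi(-y,v)=\psi(y,v)$, the integrand is a $y$-divergence whose period average vanishes. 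The analogous step at order $\eta^2$ then determines $\chi^{*1}=Q^{*-1}(v\cdot\nabla_y\chi^{*0})+\theta^{*1}\psi^*$ together with the equation $L^*\theta^{*0}=\int_V\psi\,v\cdot\nabla_y Q^{*-1}\bigl(v\cdot\nabla_y Q^{*-1}(v\psi^*+v\cdot\nabla_y\chi^{*-1})\bigr)d\nu$; one more order fixes $\theta^{*1}$ via the displayed $L^*\theta^{*1}$ equation and completes the list of coefficients.

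With all four coefficients constructed, the residual satisfies
$$(-\eta v\cdot\nabla_y+Q^*)\,r^\eta=\eta^3\,v\cdot\nabla_y\chi^{*2}.$$
I would use the remaining freedom in $\theta^{*0}$ and $\theta^{*1}$ to ensure that this source has zero average against $\psi^*$ over $Y\times V$, placing it in the domain of the inverse. Then the adjoint version of the appendix estimate (\ref{est-t})—obtained by the same proof after swapping $Q\leftrightarrow Q^*$ and $\psi\leftrightarrow\psi^*$—gives
$$\|r^\eta\|_{H^k(Y,L^2(V))}\le C\eta^{2}\,\|\chi^{*2}\|_{H^{k+1}(Y,L^2(V))},$$
and absorbing $\eta^2\chi^{*2}+r^\eta$ into $\tilde r^\eta_{\chi^*}$ delivers the claimed $O(\eta^2)$ bound. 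The required regularity of $\chi^{*2}$ follows by iterating the elliptic regularity supplied by $Q^{*-1}$ on the smooth data given by (H3).

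The main obstacle I anticipate is not any single computation but rather the parity bookkeeping needed to verify simultaneously, at each order, the two invertibility conditions: the pointwise-in-$y$ Fredholm condition for $Q^{*-1}$ (zero average against $\psi$ in $v$) and the zero-mean-on-$Y$ condition required by Proposition \ref{propl} to invert $L^*$. Both are controlled by the symmetries in (H4), which ensure that $Q^{*-1}$ stabilizes the even/odd subspaces in $v$ while each $y$-gradient reverses the $y$-parity of a $Y$-periodic integrand. Once these parity properties are tracked through the cascade—exactly as in the expansion of $\chi^\eta$—the construction and the remainder estimate are routine.
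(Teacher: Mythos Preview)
Your overall strategy is correct and matches the paper's: plug in an ansatz starting at order $\eta^{-1}$, identify the $\theta^{*j}$'s from successive compatibility conditions for $Q^{*-1}$ (which become $L^*$-equations), and control the remainder via the appendix estimates for $T^{\eta*}$. The parity bookkeeping you describe is also exactly what is needed.

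There is, however, a genuine power-counting gap in your remainder estimate. You truncate at $\chi^{*2}$, so the residual equation is $(-\eta v\cdot\nabla_y+Q^*)r^\eta=\eta^3 v\cdot\nabla_y\chi^{*2}$, and you then invoke the adjoint of (\ref{est-t}). But (\ref{est-t}) (and its adjoint (\ref{est-t*})) lose \emph{two} powers of $\eta$: with a source of size $\eta^3$ this yields only $\|r^\eta\|\le C\eta$, not the $C\eta^2$ you claim. The paper avoids this by pushing the expansion one order further, to $\chi^{*3}$; the residual source is then $\eta^4 v\cdot\nabla_y\chi^{*3}$, and (\ref{est-t*}) gives $\eta^{-2}\cdot\eta^4=\eta^2$ as required. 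Your truncation is salvageable by a different route: since $\theta^{*1}$ is determined precisely by the compatibility $\int_V\psi\,v\cdot\nabla_y\chi^{*2}\,d\nu=0$ pointwise in $y$, your source actually satisfies the hypothesis of the \emph{sharper} estimate (\ref{est-t*-2}), which loses only one power of $\eta$ and would give $\eta^{-1}\cdot\eta^3=\eta^2$. But you cited (\ref{est-t}), not (\ref{est-t-2})/(\ref{est-t*-2}), so as written the argument falls one power short.

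A smaller point: the solvability condition you need for $T^{\eta*}r^\eta=g^*$ is orthogonality of $g^*$ to $\psi^\eta$ over $Y\times V$ (Proposition~\ref{borne1}(iv)), not to $\psi^*$; the paper secures this via the oddness of the $\theta^{*i}$ rather than by adjusting free constants.
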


\begin{proof}
The proof is a straightforward adaptation of the proof of the same result in \cite{NBAPuVo}. We briefly sketch it to be self-contained.
As in the expansion of $\chi^\eta$, we start at the order $\frac{1}{\eta}$ and write
$$
\chi^{\eta*}=\frac{1}{\eta}\chi^{*-1}+\chi^{*0}+\eta \chi^{*1}+\eta^2 \chi^{*2}+\eta^3\chi^{*3}+r_{\chi^*}^\eta
$$
with
$$
Q^*(\chi^{*-1})=0\mbox{ so that }\chi^{*-1}(y,v)=\theta^{*-1}(y)\psi^*(v).
$$
Then we have
$$
Q^*(\chi^{*0})=v\psi^*+v\cdot\nabla_y\chi^{*-1}
$$
which admits, thanks to the no-drift condition, a solution  given by
$$
\chi^{*0}(y,v)=Q^{*-1}(v\psi^*+v\cdot\nabla_y\chi^{*-1})(y,v)+\theta^{*0}(y)\psi^*(v).
$$
The first order equation is
$$
Q^*(\chi^{*1})=v\cdot\nabla_y\chi^{*0}.
$$
The compatibility equation for this equation gives
$$
L^*(\theta^{*-1}):=-\int_V \psi(y,v)v\cdot\nabla_yQ^{*-1}(v\cdot\nabla_y(\psi^*(v)\theta^{*-1})d\nu(v)=\int_V \psi(y,v)v\cdot\nabla_y Q^{*-1}(v\psi^*(v))d\nu(v).
$$
This equation has a solution since
$$
\int_Y\rho^0(y)\int_V  \psi(y,v)v\cdot\nabla_y Q^{*-1}(v\psi^*(v))d\nu(v)dy=0
$$
because $\rho^0\psi$ is even with respect to the variable $(y,v)$ whereas $v\psi^*$ is odd.
Then $\chi^{*1}$ can be written  as
$$
\chi^{*1}(y,v)=Q^{*-1}(v\cdot\nabla_y\chi^{*0}(y,v))+\theta^{*1}(y)\psi^*(y,v).
$$
The compatibility condition for 
$$
Q^*(\chi^{*2})=v\cdot \nabla_y\chi^{*1}
$$
gives the equation determining $\theta^{*0}$ since we need
$$
L^*(\theta^{*0})= \int _V \psi(y,v)v\cdot\nabla_yQ^{*-1}(v\cdot\nabla_yQ^{*-1}(v\psi^*+v\cdot\nabla_y\chi^{*-1}))d\nu(v).
$$
Note that the parity properties ensure that the previous equation has a solution.
The final equation is
$$
Q^*(\chi^{*3})=v\cdot \nabla_y\chi^{*2}
$$
whose compatibility condition gives the equation for $\theta^{*1}$
$$
L^*(\theta^{*1})= \int _V \psi(y,v)v\cdot\nabla_yQ^{*-1}(v\cdot\nabla_yQ^{*-1}(\chi^{*0}))d\nu(v).
$$
The remainder term satisfies
$$
\eta v\cdot \nabla_y r_{\chi^*}^\eta-Q^*(r_{\chi^*}^\eta)=\eta^4v\cdot\nabla\chi^{*3}.
$$
As for $\chi^\eta$, $\theta^{*i}$ are odd for $i=-1,0,1$ and then, $\int_V\int_Yr_{\chi^*}^\eta\psi^*d\nu(v)dy=0$, and then, thanks to  estimate (\ref{est-t*}), we get
$$
|| r_{\chi^*}^\eta||_{H^k(Y,L^2(V))}\leq C\eta^2
$$
which concludes the proof.
\end{proof}


\section{Estimates on the densities}
\label{sec:density}

We are now ready to analyze the expansion in $\eta$ of the densities $n^{k,\eta}$ for $k=0,1$ obtained in Proposition \ref{prop:exp}. 
We start with the following simple lemma:
\begin{lemma}
\label{lem:densities}

Let $n^\eta$ be the solution of 
$$
n^\eta(x)-\nabla_x\cdot( D^\eta \cdot\nabla_x n^\eta(x))=S^\eta(x)
$$
with 
\begin{eqnarray*}
D^\eta&=&D^0+\eta D^1+\eta \tilde D^\eta, \quad\mbox{where}\quad ||| \tilde D^\eta|||\rightarrow_{\eta\rightarrow 0} 0,\quad |||\cdot||| \mbox{ a norm on }d\times d\mbox{ matrices}\\
S^\eta&=&S^0+\eta S^1+\eta \tilde S^\eta \quad\mbox{where}\quad S^0\in H^k(\Rm^d), \,S^1\in H^k(\Rm^d) \mbox{ and } ||\tilde S^\eta||_{H^k(\Rm^d)}\rightarrow_{\eta\rightarrow 0} 0.\end{eqnarray*} 
We decompose
$$
n^\eta=n^0+\eta n^1+\eta \tilde n^\eta
$$
with
$$
n^0(x)-\nabla_x\cdot({D^0}\nabla_x n^0(x))=S^0(x),\quad ||n^0||_{H^{k+1}(\Rm^d)}\leq C||S^0||_{H^k(\Rm^d)}
$$
and
$$
n^1(x)-\nabla_x\cdot({D^0}\nabla_x n^1(x))=S^1(x)+\nabla_x\cdot (D^1\cdot\nabla_x n^0(x)), \,\, ||n^1||_{H^{k+1}(\Rm^d)}\leq C||S^0||_{H^k(\Rm^d)}+||S^1||_{H^k(\Rm^d)}.
$$
Then $\tilde n^\eta$ satisfies
$$||\tilde n^\eta||_{H^{k+1}(\Rm^d)}\rightarrow 0\quad\mbox{when}\quad \eta\rightarrow 0.$$
\end{lemma}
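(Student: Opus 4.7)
The plan is to derive a closed equation for the remainder $\tilde n^\eta$ and then conclude via a uniform elliptic estimate on $\Rm^d$. First, I would insert the ansatz $n^\eta=n^0+\eta n^1+\eta\tilde n^\eta$ together with the expansions of $D^\eta$ and $S^\eta$ into the defining equation for $n^\eta$; then, using the equation for $n^0$ to cancel the $O(1)$ contributions and the equation for $n^1$ to cancel the $O(\eta)$ contributions that involve neither $\tilde n^\eta$ nor $\tilde D^\eta$, and finally dividing through by $\eta$, one obtains
$$
\tilde n^\eta - \nabla_x\cdot\bigl(D^\eta \nabla_x \tilde n^\eta\bigr) = G^\eta, \qquad
G^\eta := \tilde S^\eta + \nabla_x\cdot\bigl(\tilde D^\eta \nabla_x n^0\bigr) + \eta\,\nabla_x\cdot\bigl((D^1+\tilde D^\eta)\nabla_x n^1\bigr).
$$

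Second, I would bound $G^\eta$ in $H^{k-1}(\Rm^d)$ and show that it tends to $0$. The first contribution goes to $0$ in $H^k\subset H^{k-1}$ by hypothesis. The second is controlled by $C\,|||\tilde D^\eta|||\,\|n^0\|_{H^{k+1}(\Rm^d)}$, which vanishes using the a priori bound on $n^0$ already recorded in the statement together with $|||\tilde D^\eta|||\to 0$. The third carries an explicit prefactor $\eta$ and is of size $O(\eta)\,\|n^1\|_{H^{k+1}(\Rm^d)}=o(1)$ by the analogous bound on $n^1$.

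Third, I would invoke uniform elliptic regularity. Since $D^\eta\to D^0$ in matrix norm and the coefficients are constant in $x$ (in the intended applications they arise as averages over the unit cell $Y$), for $\eta$ small enough the operator $I-\nabla_x\cdot(D^\eta\nabla_x\,\cdot)$ is uniformly coercive on $H^1(\Rm^d)$---in fact the zeroth-order term alone provides coercivity, irrespective of any ellipticity of $D^\eta$---and a standard Sobolev bootstrap yields
$$
\|\tilde n^\eta\|_{H^{k+1}(\Rm^d)} \le C\,\|G^\eta\|_{H^{k-1}(\Rm^d)},
$$
with $C$ independent of $\eta$. Combined with the previous paragraph this closes the argument.

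The only non-routine point is the algebraic bookkeeping in the first step: one must verify that the cancellations between the equations for $n^\eta$, $n^0$, and $\eta n^1$ indeed kill every contribution that is not either a remainder of the data or already $O(\eta^2)$, so that division by $\eta$ leaves a source $G^\eta$ which is honestly $o(1)$ rather than merely bounded. Once this is in hand, the rest is a textbook application of linear elliptic theory on $\Rm^d$.
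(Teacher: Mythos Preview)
Your proposal is correct and follows essentially the same route as the paper: the authors derive the identical equation for $\tilde n^\eta$ (with your $G^\eta$ on the right-hand side), note that the source is $o(1)$ in the relevant Sobolev norm, and conclude first in $H^1$ by a standard a priori estimate and then in $H^{k+1}$ by differentiating the equation. One small caveat: your parenthetical remark that ``the zeroth-order term alone provides coercivity, irrespective of any ellipticity of $D^\eta$'' is not quite right for the $H^1$ (let alone $H^{k+1}$) control you need---you do rely on the uniform ellipticity inherited from $D^0$, which is available here.
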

\begin{proof}
Indeed $\tilde n^\eta$ satisfies the equation
$$
\tilde n^\eta-\nabla_x\cdot({D^0}\nabla_x\tilde n^\eta)-\nabla_x\cdot[(\eta D^1+\eta\tilde D^\eta)\nabla_x\tilde n^\eta]=\tilde S^\eta+\nabla_x\cdot[\tilde D^\eta \nabla_xn^0+(\eta D^1+\eta\tilde D^\eta)\nabla_xn^1].
$$
A standard a priori estimate shows that  $||\tilde n^\eta||_{H^1(\Rm^d)}\rightarrow 0$ when $\eta\rightarrow 0$. Upon differentiating the equation $k$ times, we obtain the result in $H^{k+1}(\Rm^d)$.
%
\end{proof}

We apply this lemma to derive a convergence result for $n^{0,\eta}$ and $n^{1,\eta}$.
\begin{proposition}
\label{prop:densites}
Assume $S\in H^k(\Rm^d)$. First, we have that
$$||n^{0,\eta}-n^{0,0}-\eta n^{0,1}||_{H^{k+1}(\Rm^d)}\rightarrow 0$$ where $n^{0,0}\in H^{k+1}(\Rm^d) $ is the solution to
$$
n^{0,0}-\nabla_x\cdot({\bf D}\cdot\nabla_x n^{0,0})=\int_Y\int_V S(x,y,v)\psi^*(v)dyd\nu(v)
$$
and $n^{0,1}\in H^{k+1}(\Rm^d) $ is the solution to
$$
n^{0,1}-\nabla_x\cdot({\bf D}\cdot\nabla_x n^{0,1})=\nabla_x\cdot({\bf D^1}\cdot\nabla_x n^{0,0}),
$$
$$\begin{array}{rcl} {\bf D}&=&\dint_V\int_Y(\chi^{*0}(y,v)\otimes v\psi^0(y,v)+\chi^{*-1}(y,v)\otimes v\psi^1(y,v))d\nu(v)dy \\ [3mm]
&=&\dint_V\int_YQ^{*-1}(v\psi^*+v\cdot\nabla_y\theta^{*-1}\psi^*)(y,v)\otimes v\rho^0(y)\psi(y,v)\\
&&\phantom{\dint_V\int_YQ^{*-1}(-v\psi^*+v\cdot\nabla_y}+\theta^{*-1}(y)\psi^*(v)\otimes vQ^{-1}(-v\cdot\nabla_y(\rho^0(y)\psi(y,v)))d\nu(v)dy\end{array}
$$
$$\begin{array}{rcl}
{\bf D^1}&=&\dint_V\int_Y(\chi^{*-1}(y,v)\otimes v\psi^2(y,v)+\chi^{*0}(y,v)\otimes v\psi^1(y,v)+\chi^{*1}(y,v)\otimes v\psi^0(y,v))d\nu(v)dy \\ [3mm]
&=&\dint_V\int_Y(\theta^{*-1}(y)\psi^*(v)\otimes v Q^{-1}(-v\cdot\nabla_y(Q^{-1}(-v\cdot\nabla_y(\rho^0(y)\psi(y,v)))))d\nu(v)dy\\ [3mm]
&&\dint_V\int_Y[Q^{*-1}(v\psi^*+v\cdot\nabla_y\theta^{*-1}\psi^*)+\theta^{*0}\psi^*]\otimes v Q^{-1}(-v\cdot\nabla_y(\rho^0(y)\psi(y,v))d\nu(v)dy\\ [3mm]
&&\dint_V\int_Y Q^{*-1}(v\cdot\nabla_yQ^{*-1}(v\psi^*+v\cdot\nabla_y\theta^{*-1}\psi^*)+\theta^{*1}(y)\psi^*(y,v))\otimes v\rho^0(y)\psi(y,v)d\nu(v)dy.
\end{array}
$$
Second, we have that 
\begin{equation}\label{eq:n1}
||n^{1,\eta}-\frac{1}{\eta}n^{1,-1}||_{H^{k-1} (\Rm^d)}\rightarrow_{\eta\rightarrow 0} 0
\end{equation}
with $n^{1,-1}\in H^{k-1} (\Rm^d)$ the solution of 
\begin{equation}
n^{1,-1}-\nabla_x\cdot({\bf D}\cdot\nabla_x n^{1,-1})=S^{1,-1}(x)
\end{equation}
where $S^{1,-1}\in H^{k-2}(\Rm^d)$ is given by
$$\begin{array}{rcl}
\hspace{-.25cm} S^{1,-1}(x)\!\!\!&=&\!\!\!\dps-\nabla_x\!\!\cdot\!\!\int_{Y\times V}\hspace{-.5cm} [-n^{0,0}(x)\psi(y,v)\!+\!S(x,y,v)\!+\!v\!\cdot\!\nabla_x(\chi^0(y,v)\!\cdot\!\nabla_xn^{0,0}(x))]\chi^{*-1}(y,v)d\nu(v)dy\\
&&\dps-\nabla_x\cdot\int_{Y\times V}\hspace{-.05cm}[-v\cdot\nabla_x(- \chi^{-1}(y,v)\cdot\nabla_xn^{0,0}(x))]\chi^{*0}(y,v)d\nu(v)dy .
\end{array}$$
\end{proposition}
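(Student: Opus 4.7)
The strategy is to apply Lemma \ref{lem:densities} to the diffusion equations (\ref{eq:difneta}) and (\ref{eq:difn1eta}). This reduces the problem to expanding, in powers of $\eta$, the diffusion tensor
\begin{equation*}
D^\eta := \int_{Y\times V} v\chi^\eta(y,v)\psi^*(v) d\nu(v) dy
\end{equation*}
to order $\eta$, and the two source terms to the required orders, using the expansions of $\psi^\eta$, $\chi^\eta$, and $\chi^{\eta*}$ established in Section \ref{sec:aux}. By (H4), $\psi^{\eta*}(v)=\psi^*(v)$ is independent of $\eta$, so the source of (\ref{eq:difneta}) is already $\eta$-independent.

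To expand $D^\eta$ I would first pass to an adjoint representation. Integrating $T^\eta\chi^\eta = v\psi^\eta$ against $\chi^{\eta*}$ and $T^{\eta*}\chi^{\eta*} = v\psi^*$ against $\chi^\eta$, then using periodicity in $y$, one obtains
\begin{equation*}
D^\eta = \int_{Y\times V} v\chi^{\eta*}(y,v)\psi^\eta(y,v) d\nu(v) dy.
\end{equation*}
Inserting $\chi^{\eta*}=\eta^{-1}\chi^{*-1}+\chi^{*0}+\eta\chi^{*1}+\tilde r^\eta_{\chi^*}$ and $\psi^\eta=\psi^0+\eta\psi^1+\eta^2\psi^2+\tilde r^\eta_p$, the $\eta^{-1}$ coefficient equals $\int_Y \theta^{*-1}(y)\rho^0(y)\bigl(\int_V v\psi^*(v)\psi(y,v) d\nu(v)\bigr) dy$, which vanishes by the pointwise no-drift identity $\int_V v\psi\psi^* d\nu(v)=0$ following from (H4). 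The $\eta^0$ and $\eta$ coefficients then produce exactly the tensors ${\bf D}$ and ${\bf D}^1$ in the statement, while the uniform $H^k(Y,L^2(V))$ bounds on $\tilde r^\eta_p$ and $\tilde r^\eta_{\chi^*}$ yield an $o(\eta)$ remainder. Applying Lemma \ref{lem:densities} delivers the claimed expansion of $n^{0,\eta}$ in $H^{k+1}(\mathbb R^d)$.

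For $n^{1,\eta}$ the diffusion tensor is the same; only the source changes. After integrating by parts in $x$ this source reads $-\nabla_x\cdot\int_{Y\times V} v\overline f^{2,\eta}\psi^* d\nu(v)dy$, and the same adjoint identity rewrites it as $-\nabla_x\cdot\int_{Y\times V}\chi^{\eta*} g^\eta d\nu(v)dy$, where $g^\eta=S-n^{0,\eta}\psi^\eta+v\cdot\nabla_x(\chi^\eta\cdot\nabla_xn^{0,\eta})$ is the right-hand side of the equation $T^\eta\overline f^{2,\eta}=g^\eta$. Since $\chi^\eta=O(\eta^{-1})$, the product $\chi^{\eta*}g^\eta$ is a priori $O(\eta^{-2})$; however the $\eta^{-2}$ coefficient equals $\partial^2_{x_ix_j}n^{0,0}\int_Y\theta^{*-1}_k(y)\theta^{-1}_j(y)\bigl(\int_V v_i\psi^*(v)\psi(y,v) d\nu(v)\bigr)dy$ and vanishes by the same no-drift condition. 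The source is therefore of order $\eta^{-1}$, and collecting all $\eta^{-1}$ contributions (from $\chi^{*-1}g^0$ and $\chi^{*0}g^{-1}$) produces precisely $S^{1,-1}$. Applying Lemma \ref{lem:densities} to the rescaled unknown $\eta n^{1,\eta}$ then gives (\ref{eq:n1}).

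The main obstacle will be the careful bookkeeping of these two no-drift cancellations, at order $\eta^{-1}$ in the diffusion tensor and at order $\eta^{-2}$ in the source of $n^{1,\eta}$, both hinging on the fact that $\chi^{*-1}$ and $\chi^{-1}$ are respectively proportional to $\psi^*$ and $\psi$ so that (H4) applies. A secondary difficulty is to check the hypotheses of Lemma \ref{lem:densities}: the perturbations $\tilde D^\eta$ and $\tilde S^\eta$ must be controlled in appropriate norms, which requires combining the $H^k(Y,L^2(V))$ remainders from Section \ref{sec:aux} with the regularity of $n^{0,0}$ and $n^{0,1}$ inherited from $S\in H^k(\mathbb R^d)$, keeping track of the loss of derivatives induced by the second-order operator $v\cdot\nabla_x(\chi^\eta\cdot\nabla_xn^{0,\eta})$ which explains why $n^{1,-1}$ is only obtained in $H^{k-1}(\mathbb R^d)$.
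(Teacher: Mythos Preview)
Your proposal is correct and follows essentially the same route as the paper: pass to the adjoint representation $D^\eta=\int_{Y\times V}\chi^{\eta*}\otimes v\psi^\eta$, insert the expansions of $\psi^\eta$ and $\chi^{\eta*}$ from Section~\ref{sec:aux}, and apply Lemma~\ref{lem:densities}; for $n^{1,\eta}$, rewrite the source via $\int v\overline f^{2,\eta}\psi^*=\int (T^\eta\overline f^{2,\eta})\chi^{\eta*}$ and expand. You are in fact more explicit than the paper about the two no-drift cancellations (at order $\eta^{-1}$ in $D^\eta$ and at order $\eta^{-2}$ in $S^{1,\eta}$), which the paper simply absorbs into the asserted expansions; the paper also rescales by $\eta^2$ rather than $\eta$ when invoking Lemma~\ref{lem:densities} for $n^{1,\eta}$, but either choice yields (\ref{eq:n1}).
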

\begin{proof}
Recall that $n^{0,\eta}$ satisfies
$$
n^{0,\eta}+\nabla_x\cdot(D^\eta\cdot\nabla_xn^{0,\eta})=\int_Y\int_V S(x,y,v)\psi^{*}(v)d\nu(v)dy.
$$
By using the definition of $\dps D^\eta=\int_V\int_Y \chi^{\eta*}(y,v)\otimes v\psi^\eta(v) dyd\nu(v)$ and the expansions of $\psi^\eta$ and $\chi^{\eta*}$ obtained in the preceding section, we obtain that
$$ 
D^\eta={\bf D}+ \eta {\bf D^1}+\eta \tilde D^\eta\quad \mbox{ with }\quad |||\tilde D^\eta||| \rightarrow_{\eta\rightarrow 0} 0.
$$
Then Lemma \ref{lem:densities} gives the first result.
On the another hand, $n^{1,\eta}$ satisfies
$$
n^{1,\eta}-\nabla_x\cdot(D^\eta\cdot\nabla_xn^{1,\eta})=S^{1,\eta}(x)$$
with 
\begin{eqnarray*}
\!S^{1,\eta}(x)\!\!\!&=&\!\!-\nabla_x\cdot\int_Y\int_V (\overline f^{2,\eta}(x,y,v))v\psi^{\eta*}(v)d\nu(v)dy\\
&=&\!\!-\nabla_x\int_Y\int_V \overline f^{2,\eta}T^{\eta *}\chi^{\eta*}(y,v)d\nu(v)dy
=-\nabla_x\int_Y\int_V T^{\eta }\overline f^{2,\eta}\chi^{\eta*}(y,v)d\nu(v)dy\\
&=&\!\!-\nabla_x\!\!\int_{Y\times V}\hspace{-.5cm}  [-n^{0,\eta}(x)\psi^\eta(y,v)\!+\!S(x,y,v)
\!+\!v\!\cdot\!\nabla_x(\chi^\eta(y,v)\!\cdot\!\nabla_xn^{0,\eta}(x))]\chi^{\eta*}(y,v)d\nu(v)dy\\
&=& \frac{1}{\eta} S^{1,-1}(x) + o(\frac{1}{\eta}).
\end{eqnarray*}
Since $S^{1,\eta}$ is of order $\frac{1}{\eta}$, we apply Lemma \ref{lem:densities} to $\eta^2 n^{1,\eta}$ to obtain (\ref{eq:n1}). This concludes the proof of the result.
\end{proof}


\section{Proof of Theorem \ref{th:main}}
\label{sec:proof}

We are now in a position to conclude the proof of our main result. We first obtain an estimate for the remainder term $r^{\eps,\eta}$ in \eqref{eq:expeps}.
\begin{proposition} The different terms involved in the estimate of the remainder term $r^{\eps,\eta}$ are bounded as follows
\begin{equation}\label{eq:estf01}
||f^{0,\eta}||_{H^2_xL^2_vL^\infty_y}\leq C\end{equation}
\begin{equation}\label{eq:estfk}
||f^{k,\eta}||_{H^{4-k}_xL^2_vL^\infty_y}\leq\frac{C}{\eta^k}\quad\mbox{and}\quad ||\overline{v\cdot \nabla_x f^{k,\eta}\psi^{*}}||_{H^{3-k}_xL^2_vL^\infty_y}\leq \frac C{\eta^{k-1}}\, \quad 1\leq k\leq 3. \end{equation}
These result yield that 
$$
||r^{\eps,\eta}||_{L^2_{x,v}}=o(\frac{\eps}{\eta}).
$$
\end{proposition}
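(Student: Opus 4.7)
The plan is to combine the four explicit formulas for $f^{k,\eta}$, $0\le k\le 3$, from Proposition~\ref{prop:exp} with the $\eta$-expansions of $\psi^\eta$, $\chi^\eta$, $\chi^{\eta*}$ from Section~\ref{sec:aux} and the $\eta$-expansions of $n^{0,\eta}$, $n^{1,\eta}$ from Proposition~\ref{prop:densites}, and then feed the resulting bounds into the a~priori estimate \eqref{eq:estr1}. Because $\eta\ll 1$ and $\eps/\eta\ll 1$, the finale will just be the observation that each term on the right of \eqref{eq:estr1} is of the form $(\eps/\eta)^j$ with $j\ge 2$, hence $o(\eps/\eta)$.

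For the $x$-bounds, I would proceed term by term. The estimate on $f^{0,\eta}=n^{0,\eta}\psi^\eta$ in $H^2_xL^2_vL^\infty_y$ follows at once: $\psi^\eta$ is uniformly bounded in $L^\infty_yL^2_v$ by the expansion \eqref{eq:exp-psi-eta} (taking $k$ large), while $n^{0,\eta}$ is bounded in $H^{k+1}(\Rm^d)$ by Proposition~\ref{prop:densites} and assumption (H2). For $f^{1,\eta}=-\chi^\eta\cdot\nabla_xn^{0,\eta}+n^{1,\eta}\psi^\eta$ the factor $1/\eta$ appears twice, from the bound $\|\chi^\eta\|_{L^2_vL^\infty_y}\le C/\eta$ in \eqref{eq:intchi} and from the estimate $\|n^{1,\eta}\|_{H^{k-1}}\le C/\eta$ in \eqref{eq:n1}; keeping only the dominant contribution yields the bound $C/\eta$. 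For $f^{2,\eta}$ and $f^{3,\eta}$ I would write each as a composition of $T^{\eta-1}$ acting on inputs whose compatibility conditions are satisfied by construction (these are precisely the equations \eqref{eq:difneta} and \eqref{eq:difn1eta} defining $n^{0,\eta}$ and $n^{1,\eta}$). Each application of $T^{\eta-1}$ costs one power of $1/\eta$ by the appendix estimate (\ref{est-t-2}) applied cellwise in $y$, and each $\nabla_x$ consumes one derivative from $n^{0,\eta}$, $n^{1,\eta}$ or $S$, whence the stated $H^{4-k}_x$ regularity thresholds. The cleaner estimate on $\overline{v\cdot\nabla_xf^{k,\eta}\psi^{*}}$ is obtained by using the identity $v\psi^{*}=Q^{*}\chi^{*0}+\eta\,\cdots$ (i.e.\ $v\psi^{*}=T^{\eta*}\chi^{\eta*}$ up to lower-order corrections) to exchange one $T^{\eta-1}$ inside for a $T^{\eta*}$ outside, which integrates against $\chi^{\eta*}$ and gains exactly one power of $\eta$ — this is why the second bound in \eqref{eq:estfk} beats the naive one by a factor of $\eta$.

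Having those bounds in hand, substituting into \eqref{eq:estr1} gives
\[
\|r^{\eps,\eta}\|_{L^2_{xv}}\le C\eps^2\cdot\frac{1}{\eta^{2}}+C\eps^3\cdot\frac{1}{\eta^{3}}+C\eps^2\cdot\frac{1}{\eta^{2}}+C\eps^3\cdot\frac{1}{\eta^{3}}=C\Bigl(\frac{\eps}{\eta}\Bigr)^{2}+C\Bigl(\frac{\eps}{\eta}\Bigr)^{3},
\]
which is $o(\eps/\eta)$ since $\eps/\eta\to 0$ by our standing assumption $\eps\ll\alpha$. The bound $\|v\cdot\nabla_xf^{3,\eta}\|_{L^2_xL^\infty_yL^2_v}$ required in the second term is controlled by $\|f^{3,\eta}\|_{H^1_xL^2_vL^\infty_y}\le C/\eta^{3}$, using that $4-3=1$ matches the single $x$-derivative.

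The only delicate point is the bookkeeping in the second paragraph: one must verify at every stage that the function to which $T^{\eta-1}$ is applied has zero average against $\psi^{*}$ over $Y\times V$, and one must use a large enough $k$ in the Sobolev index on $Y$ so that $L^2_vL^\infty_y$-bounds follow by Sobolev embedding from the $H^k(Y,L^2(V))$ estimates in Section~\ref{sec:aux}. The hypothesis $S\in H^4(\Rm^d(C^\infty(Y),L^2(V)))$ in (H2) is used precisely to allow the four successive $\nabla_x$ derivatives that are needed in the expansion; beyond this, the calculation is a systematic assembly of the $\eta$- and $\eps$-expansions already developed.
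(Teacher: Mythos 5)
Your overall architecture is the paper's (explicit formulas from Proposition \ref{prop:exp}, the cell expansions of Section \ref{sec:aux}, the density bounds of Proposition \ref{prop:densites}, then \eqref{eq:estr1}), and your final assembly $\|r^{\eps,\eta}\|_{L^2_{x,v}}\le C\bigl((\eps/\eta)^2+(\eps/\eta)^3\bigr)=o(\eps/\eta)$ agrees with the paper. But there is a genuine gap at the heart of the bookkeeping for $f^{2,\eta}$ and $f^{3,\eta}$. You assert that ``each application of $T^{\eta-1}$ costs one power of $1/\eta$ by the appendix estimate \eqref{est-t-2} applied cellwise in $y$,'' and you propose to check only that each source has zero average against $\psi^*$ over $Y\times V$. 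That global condition is merely the solvability condition, and it entitles you only to \eqref{est-t}, which costs $\eta^{-2}$ per inversion. The improved estimate \eqref{est-t-2} requires the strictly stronger hypothesis $\int_V g\,\psi^*\,d\nu(v)=0$ for a.e.\ $y\in Y$, pointwise in the cell variable, and the sources in question --- e.g.\ $-f^{0,\eta}+S-v\cdot\nabla_x f^{1,\eta}$ for $f^{2,\eta}$ --- do \emph{not} satisfy it: only their $Y\times V$ average vanishes. Had you used \eqref{est-t} alone, you would get $f^{2,\eta}=O(\eta^{-3})$ and $f^{3,\eta}=O(\eta^{-5})$, and then $\eps^2\|f^{2,\eta}\|\sim\eps^2/\eta^3=(\eps/\eta)\,(\alpha^2/\eps)$, which is \emph{not} $o(\eps/\eta)$ in the high-scattering regime $\eps\ll\alpha^2$ covered by Theorem \ref{th:main}.

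The paper closes this gap with a decomposition absent from your argument:
\begin{equation*}
f^{k,\eta}=T^{\eta-1}\Bigl[\Bigl(\dint_V T^\eta(f^{k,\eta})\psi^*\,d\nu(v)\Bigr)\psi(y,v)\Bigr]
+T^{\eta-1}\Bigl[T^\eta(f^{k,\eta})-\Bigl(\dint_V T^\eta(f^{k,\eta})\psi^*\,d\nu(v)\Bigr)\psi(y,v)\Bigr].
\end{equation*}
The second piece is cellwise mean zero, so \eqref{est-t-2} applies at cost $\eta^{-1}$ to a source of size $\eta^{-(k-1)}$; the first piece violates the cellwise condition, but its coefficient $\int_V T^\eta(f^{k,\eta})\psi^*\,d\nu(v)$ is one power of $\eta$ smaller than $T^\eta(f^{k,\eta})$ itself --- precisely because of the moment bounds $\|\overline{v\cdot\nabla_x f^{k-1,\eta}\psi^{*}}\|\le C\eta^{-(k-2)}$ --- so applying \eqref{est-t} at cost $\eta^{-2}$ still lands at $O(\eta^{-k})$. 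Consequently the moment estimates in \eqref{eq:estfk}, which you present as a ``cleaner'' by-product, are in fact an essential input and must be proved in an interleaved induction (moment of $f^{1,\eta}$ before $f^{2,\eta}$, moment of $f^{2,\eta}$ before $f^{3,\eta}$). Your mechanism for those moments --- writing $v\psi^*=Q^*(\chi^*)$ with $\chi^*=Q^{*-1}(v\psi^*)$, moving $Q$ onto $f^{k,\eta}$, and using the cell equation to trade $Q(f^{k,\eta})$ for $-\eta v\cdot\nabla_y f^{k,\eta}$ plus lower-order sources --- is exactly the paper's computation and is sound; note, however, that it must be carried out pointwise in $y$ (hence with the $\eta$-independent $\chi^*$ rather than $\chi^{\eta*}$, as your $T^{\eta*}$ variant would require an integration by parts in $y$ that is unavailable inside an $L^\infty_y$ norm).
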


\begin{proof}
Let us start with $f^{0,\eta}$. Recall that $f^{0,\eta}(x,y,v)=n^{0,\eta}(x)\psi^\eta(y,v)$
and that
$$
||\psi^\eta||_{L^2_vL^\infty_y}\leq C\quad \mbox{and}\quad ||n^{0,\eta}||_{H^2_x}\leq C.
$$
This gives (\ref{eq:estf01}).
Concerning $f^{1,\eta}=-\chi^\eta(y,v)\cdot\nabla_xn^{0,\eta}(x)+n^{1,\eta}(x)\psi^\eta(y,v)$, we get
$$
||f^{1,\eta}||_{H^3_xL^2_vL^\infty_y}\leq C[ ||n^{0,\eta}||_{H^4_x}||\chi^\eta||_{L^2_vL^\infty_y}+||n^{1,\eta}||_{H^3_x}||\psi^\eta||_{L^2_vL^\infty_y}]
$$
and
$$\begin{array}{rcl}
\dps ||\overline{v\cdot \nabla_x f^{1,\eta}\psi^{*}}||_{H^2_xL^2_vL^\infty_y}&\leq&\dps ||n^{0,\eta}||_{H^4_x}| ||\overline{v\chi^\eta\psi^{*}}||_{H^2_xL^2_vL^\infty_y}+||n^{1,\eta}||_{H^3_x}||\overline{v\psi^\eta\psi^{*}}||_{H^2_xL^2_vL^\infty_y}\leq C.
\end{array}$$
This yields (\ref{eq:estfk}) for $k=1$.
Let us now consider $f^{2,\eta}$. Since $\int_V\int_Y f^{2,\eta}\psi^*(v)d\nu(v)dy=0$, $||T^\eta(f^{2,\eta})||_{H^2_xL^2_vL^\infty_y}\leq \frac{C}{\eta}$  and $|| \int_V T^\eta(f^{2,\eta})\psi^*(v)d\nu(v)dv||_{H^2_xL^\infty_y} \leq C$, we get the first point of (\ref{eq:estfk}) for $k=2$. Indeed, we decompose
$$
f^{2,\eta}=T^{\eta -1} [\int_V T^\eta(f^{2,\eta})\psi^*(v)d\nu(v)dv\psi(y,v)]+T^{\eta -1}[T^\eta(f^{2,\eta})-\int_V T^\eta(f^{2,\eta})\psi^*(v)d\nu(v)dv\psi(y,v)]
$$
and use (\ref{est-t}) and (\ref{est-t-2}), respectively, obtained in the appendix below.


Concerning the second point, we write after defining $\chi^*=Q^{*-1}(v\psi^*)$,
\begin{eqnarray*}
\overline{v\cdot \nabla_x f^{2,\eta}\psi^{\eta*}}&=&\int_V v\cdot\nabla_x f^{2,\eta}\psi^{*}d\nu(v)\,=\, \int_V \nabla_x f^{2,\eta}\cdot Q^*(\chi^*(y,v))d\nu(v)\\
&=&\int_V \nabla_x Q(f^{2,\eta})\cdot\chi^*(y,v)d\nu(v)\\
&=&\int_V\nabla_x (-\eta v\cdot\nabla_y f^{2,\eta}-v\cdot\nabla_x f^{1,\eta}(x,y,v)-f^{0,\eta}+S)\cdot\chi^*(y,v)d\nu(v),
\end{eqnarray*}
so that 
\begin{eqnarray*}
||\overline{v\cdot \nabla_x f^{2,\eta}\psi^{\eta*}}||_{H^1_xL^2_vL^\infty_y}&\leq& C[ \eta||f^{2,\eta}||_{H^{2}_xL^2_vL^\infty_y}+||f^{1,\eta}||_{H^{3}_xL^2_vL^\infty_y}+||f^{0,\eta}||_{H^{2}_xL^2_vL^\infty_y}+||S||_{H^{2}_xL^2_vL^\infty_y}]
\end{eqnarray*}
is bounded by $C\eta^{-1}$.
In a same way, since $\int_V\int_Y f^{3,\eta}\psi^*(v)d\nu(v)dy=0$ and $||T^\eta(f^{3,\eta})||_{H^1_xL^2_vL^\infty_y}\leq \frac{C}{\eta}$and $||\int_V T^\eta(f^{3,\eta})\psi^*(v)d\nu(v)dv||_{H^1_xL^\infty_y} \leq \frac{C}{\eta}$, we get the first point of (\ref{eq:estfk}) for $k=3$.

Finally, we write
\begin{eqnarray*}
\overline{v\cdot \nabla_x f^{3,\eta}\psi^{\eta*}}&=&\int_V v\cdot\nabla_x f^{3,\eta}\psi^{*}d\nu(v)\,=\,\int_V \nabla_x f^{3,\eta}\cdot Q^*(\chi^*(y,v))d\nu(v)\\
&=&\int_V \nabla_x Q(f^{3,\eta})\cdot\chi^*(y,v)d\nu(v)\\
&=&\int_V \nabla_x (-\eta v\cdot\nabla_y f^{3,\eta}-v\cdot\nabla_x f^{2,\eta}(x,y,v)-f^{1,\eta})\cdot\chi^*(y,v)d\nu(v),
\end{eqnarray*}
so that 
\begin{eqnarray*}
||\overline{v\cdot \nabla_x f^{3,\eta}\psi^{\eta*}}||_{L^2_xL^2_vL^\infty_y}&\leq& C[ \eta||f^{3,\eta}||_{H^1_xL^2_vL^\infty_y}+||f^{2,\eta}||_{H^2_xL^2_vL^\infty_y}+||f^{1,\eta}||_{H^1_xL^2_vL^\infty_y}]\\
&\leq &\frac{C}{\eta^2}
\end{eqnarray*}
which is bounded by $C\eta^{-2}$. This concludes the proof of \eqref{eq:estfk} for $k=3$.

In view of the estimate \eqref{eq:estr1} and \eqref{eq:estfk}, we get
$$
||r^{\eps,\eta}||_{L^2_{x,v}}\leq C(\frac{\eps^2}{\eta^2}+\frac{\eps^3}{\eta^3}).
$$ This concludes the proof of the proposition.
\end{proof}

Now, the rest of the proof of the main theorem simply consists of collecting the expression for $f^{\eps,\eta}$ given in \eqref{eq:expeps}, the above estimate for $r^{\eps,\eta}$, the expansions for the densities in section \ref{sec:density} and the expansions for the auxiliary functions $\psi^\eta$ and $\chi^\eta=T^{\eta-1}(v\psi^\eta)$ given in section \ref{sec:aux}. This yields \eqref{eq:main} and concludes the proof of Theorem \ref{th:main}.

\section{Postscript} Guillaume Bal and Marjolaine Puel concluded this work after Naoufel Ben Abdallah passed away the Fifth of July, 2010. We keep the memories of beautiful discussions we started in Montreal and continued in Toulouse.

\appendix
\section{Previous results}
\label{annex:previous}
In this appendix, we improve useful propositions proved in \cite{NBAPuVo}. Their proofs in the case where $\psi^*$ depends on $v$ is a straightforward adaptation of the proofs obtained in \cite{NBAPuVo} when $\psi^*=1$, except for the estimates (\ref{est-t}) and (\ref{est-t-2}) below which are new.

 \begin{proposition}\cite{NBAPuVo}
\label{propl}
Let $L$ be the unbounded operator  on $L^2_{per}(\Rm^d_{y}) $ with domain $H^2_{per}(\Rm^d_{y}) $, defined by 
$$\begin{array}{rcl}
 L(\rho)&=&\dps -\int_V \psi^*(v) v\cdot\nabla_y(Q^{-1}(v\cdot\nabla_y(\psi(y,v)\rho(y))))d\nu(v),\\
 &=&\dps-\mbox{div}_{y}(D \nabla_{y}\rho ) +\nabla_y\cdot( U(y)  \rho )
\end{array}$$
and let  $L^*$ be its  adjoint defined by
\begin{equation}
\label{opl*}
L^*(n) = -\mbox{div}_{y}(D^\top(y) \nabla_{y}n ) - U(y)  \cdot \nabla_{y} n  \quad \mbox{with periodic boundary conditions}.
\end{equation}
The matrix-valued function $D$ and the vector field $U$ are given by 
$$
D(y)=  \int_V \psi^* v\otimes Q^{-1}(v\psi)d\nu(v) ,    \quad \quad U(y)=\int_V \psi^* v\otimes Q^{-1}(v\cdot\nabla_y\psi)d\nu(v).
$$  
The following statements hold:\\
(i) $Im(L) = \{u\in L^2_{per}(\Rm^d_{y}) , \quad s.t. \quad \int_Y u\, dy  = 0\}~.$ \\
(ii)There exists a unique, nonnegative function $\rho(y)  \in H^2_{per} (\Rm^d_{y}) $ such that $L(\rho) = 0$ and $\int_Y \rho \, dy  = 1$~.\\
(iii) For any function $v\in Im(L)$, there exists a unique solution  to $L(u) = v$, $\int_Y u\, dy  = 0$ with $u \in H^2_{per}(\Rm^d_{y}) $. 
This solution will be denoted $u = L^{-1} (v)$, and $L^{-1}$ will be referred to as the pseudo inverse of $L$.\\
(iv) $ Im(L^*) = \{u\in L^2_{per}(\Rm^d_{y}) , \quad s.t. \quad \int_Y u(y)  \rho(y) \, dy  = 0\}~.$ \\
(v) The kernel of $L^*$ is the set of constant functions (in the variable $y$).\\
(vi) For any function $v^*\in Im(L^*)$, there exists a unique solution to $L^*(u^*) = v^*$, $\int_Y u^*(y) \, dy  = 0$
with $u^* \in H^2_{per}(\Rm^d_{y}) $. This solution will be denoted $u^* = L^{*-1} (v^*)$, and $L^{*-1}$ will be referred to as the pseudo-inverse of $L^*$.\\
(vii) $D(y)$,  $U(y)$ and $\rho(y)$ have the following symmetry properties:
\begin{itemize}
\item The diffusion matrix $y\mapsto D(y)$ is even. The flux $y\mapsto U(y)$ is odd. 
The equilibrium function $y\mapsto\rho(y)$ is even.
\item The sets of even and odd functions in $y$ are invariant under $L^{-1}$ and $L^{*-1}$.
\end{itemize}
(viii) The diffusion matrix $D(y)$ is in $C^\infty_{per}(\Rm^d_{y})$ and satisfies
\begin{equation}\label{aaa}
 D(y)\xi\cdot\xi=0 \iff \xi=0 \quad \mbox{and furthermore}\quad D(y)\xi\cdot\xi\geq\beta|\xi|^2~,
\end{equation}
for some constant $\beta>0$.\\
(ix) There exists a constant $C$ such that
\begin{equation}
\label{elliptic}
\left\{
\begin{array}{ll}
\|L^{*-1} v^*\|_{H^2_{per}} \leq C \|v^*\|_{L^2_{per}}, &\quad \|L^{-1} v\|_{H^2_{per}} \leq  C \|v\|_{L^2_{per}},\\[3mm]
\|L^{*-1} v^*\|_{L^2_{per}} \leq C \|v^*\|_{H^{-2}_{per}}, &\quad\|L^{-1} v\|_{L^2_{per}} \leq  C \|v \|_{H^{-2}_{per}}~,
\end{array}
\right.
\end{equation}
for all $v\in Im (L)$ and $v^*\in Im(L^*)$.

\end{proposition}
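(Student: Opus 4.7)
My strategy is to exhibit $L$ explicitly as a non-self-adjoint, uniformly elliptic second-order operator in divergence form with smooth periodic coefficients on the torus $Y$, at which point (i)--(vi) and (ix) follow from the Fredholm alternative combined with Krein--Rutman, and (vii)--(viii) reduce to symmetry bookkeeping and a dissipation identity for $Q$. This parallels the argument of \cite{NBAPuVo} for the case $\psi^*\equiv 1$; the adaptation to a $v$-dependent $\psi^*$ is clean because $\psi^*$ is independent of $y$, so it passes through every $y$-differentiation and every $Q^{-1}$ untouched.

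\emph{Divergence form and coercivity.} Using $v\cdot\nabla_y(\psi\rho)=\rho\,v\cdot\nabla_y\psi+\psi\,v\cdot\nabla_y\rho$ and the fact that $\nabla_y\rho$ does not depend on $v$, linearity of $Q^{-1}$ gives
$$Q^{-1}(v\cdot\nabla_y(\psi\rho))=\rho\,Q^{-1}(v\cdot\nabla_y\psi)+Q^{-1}(v\psi)\cdot\nabla_y\rho.$$
Applying $v\cdot\nabla_y$ and integrating against $\psi^*(v)\,d\nu(v)$ reproduces the announced divergence form with $D,U$ as stated; the smoothness asserted in (viii) follows from (H3), since the resolvent formula (\ref{eq:Qinv}) preserves $C^\infty$-regularity in the parameter $y$. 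For the coercivity half of (viii), set $g(v):=v\cdot\xi$ and $\chi:=Q^{-1}(g\psi)$. The no-drift identity $\int_V g\psi\psi^*\,d\nu(v)=0$ that follows from (H4) puts $g\psi$ in the domain of $Q^{-1}$, so $Q\chi=g\psi$ and
$$D(y)\xi\cdot\xi=\int_V\psi^*(v)\,g(v)\,\chi(y,v)\,d\nu(v).$$
The standard pointwise dissipation identity for $Q$ (the one displayed in the commented block of the excerpt) then yields
$$D(y)\xi\cdot\xi\;\ge\;\Big\|\chi-\psi\!\int_V\chi\psi^*\,d\nu(v)\Big\|_{L^2(V)}^{2}\;\ge\;0,$$
with equality forcing $\chi\propto\psi$, hence $Q\chi=0$, hence $g\psi\equiv 0$, hence $v\cdot\xi=0$ a.e., hence $\xi=0$ by (H1). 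Continuity of $y\mapsto D(y)$ on the compact torus then upgrades this to a uniform $\beta>0$.

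\emph{Fredholm theory.} With coercivity in hand, $L$ and $L^*$ are uniformly elliptic divergence-form operators with smooth periodic coefficients on $Y$, hence Fredholm of index zero between $H^2_{\mathrm{per}}$ and $L^2_{\mathrm{per}}$. Assertion (v) comes from the strong maximum principle: $L^*(1)=0$ is immediate, and any element of $\ker L^*$ must attain its extrema on the connected compact torus and hence be constant. The Krein--Rutman theorem applied to the positivity-preserving resolvent $(\mathrm{Id}+L)^{-1}$ (equivalently, to the semigroup $e^{-tL}$) yields a unique positive $\rho\in H^2_{\mathrm{per}}$ with $L\rho=0$ and $\int_Y\rho=1$, giving (ii). Fredholmness then gives (i) and (iii) via $\mathrm{Im}\,L=(\ker L^*)^\perp=\{u:\int_Y u\,dy=0\}$, and by duality (iv) and (vi) via $\mathrm{Im}\,L^*=(\ker L)^\perp=\{u:\int_Y u\rho\,dy=0\}$. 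The bounds (ix) are the standard $H^2_{\mathrm{per}}$-regularity estimates for uniformly elliptic divergence-form operators on the torus, together with their $L^2\!\to\! H^{-2}$ versions by transposition.

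\emph{Parities (vii).} Hypothesis (H4) gives $\psi^*(-v)=\psi^*(v)$, $\psi(-y,v)=\psi(y,v)=\psi(y,-v)$, and $\sigma(-y,\cdot,\cdot)=\sigma(y,\cdot,\cdot)=\sigma(y,-\cdot,-\cdot)$, so $Q$ and $Q^{-1}$ commute with both reflections $y\mapsto-y$ and $v\mapsto-v$. Since $\psi$ is even in $y$, $\nabla_y\psi$ is odd in $y$; combining this with the change of variable $v\mapsto-v$ in the integrals defining $D$ and $U$ yields $D(-y)=D(y)$ and $U(-y)=-U(y)$ directly. The even/odd invariance of $L^{-1}$ and $L^{*-1}$ then follows from the uniqueness clauses already established, and uniqueness of the positive equilibrium forces $\rho(-y)=\rho(y)$. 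The main obstacle I foresee is the coercivity step: matching the algebraic manipulation used in \cite{NBAPuVo} for $\psi^*\equiv 1$ with the pointwise $Q$-dissipation identity when $\psi^*$ varies in $v$, and tracking the sign convention in (\ref{eq:Qinv}) so that $D(y)\xi\cdot\xi$ comes out with the expected sign.
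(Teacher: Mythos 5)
Your proof is correct and follows exactly the route this paper relies on: the paper gives no proof of Proposition \ref{propl} itself, quoting it from \cite{NBAPuVo} with the statement that the adaptation to a $v$-dependent (but $y$-independent) $\psi^*$ is straightforward, and your divergence-form reduction, the coercivity of $D$ via the $Q$-dissipation inequality plus (H1), the Fredholm-index-zero argument, and the Krein--Rutman/maximum-principle identification of the equilibrium --- in particular pinning the principal eigenvalue at zero through $\ker L^*=\{\mbox{constants}\}$, which is precisely the point the paper's remark following the proposition singles out as the place where $y$-independence of $\psi^*$ is needed --- constitute exactly that standard adaptation. Two cosmetic refinements: the resolvent $(\mathrm{Id}+L)^{-1}$ should be $(\lambda+L)^{-1}$ with $\lambda$ large enough to dominate the zeroth-order term $\nabla_y\cdot U$ so that the maximum principle makes it positivity-preserving, and, as you yourself flag, the sign in \eqref{eq:Qinv} must be read so that $Q\,Q^{-1}=\mathrm{Id}$ on the admissible subspace (as literally written it would give $Q\,Q^{-1}=-\mathrm{Id}$, flipping the sign of $D\xi\cdot\xi$ in your dissipation computation).
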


\begin{remark}
We need the fact that $\psi^*$ does not depend on $y$ to identify zero as the first eigenvalue of $L^*$ and then of $L$. When $\psi^*$ is allowed to depend on $y$, more complex global equilibria (or possibly the lack of such a global equilibrium) need to be analyzed on $Y$. 
\end{remark}

  \begin{proposition}\label{borne1} 
The operator $T^\eta = \eta v \cdot \nabla_{y} + Q[y](\cdot)$
 is an unbounded operator on $L^2_{per}(\Rm^d_{y}\times V)$ with domain
$$\mathcal{D} = \{u \in L^2_{per}(\Rm^d_{y}\times V),\quad\mbox{such that } v \cdot \nabla_{y} u  \in L^2_{per}(\Rm^d_{y}\times V) ~\}~.$$
Let $T^{\eta^*} = - \eta v \cdot \nabla_{y} + Q^*[y](\cdot)$ be the adjoint of $T^{\eta}$. Then\\
(i)  The kernel of $T^\eta$ is a one dimensional space spanned by a positive function $\psi^\eta(y,v) $ normalized by
$\int_{V\times Y} \psi^\eta \psi^*(v)d\nu(v) dy  = 1$. The function $\psi^\eta$ satisfies 
 \begin{equation}
\psi^\eta (-y,-v) = \psi^\eta(y,v), \quad a.e. \hbox{ in } y \hbox{ and } v~.
\end{equation}
(ii) The range of $T^\eta$ is the set of functions $g \in L^2_{per}(\Rm^d_{y}\times V)$ such that we have $\int_{V\times Y} g(y,v)\psi^*(v) \, d\nu(v) dy = 0$~.\\ 
(iii) The adjoint $T^{\eta*}$ has the same domain $\mathcal{D}$. Its range is the set of functions $g$ such that
$\int_{V\times Y} \psi^\eta(y,v) \, g(y,v) d\nu(v) dy = 0.$ Its kernel is spanned by $\psi^*$.\\ 
 \item  For $g\in L^2_{per}(\Rm^d_{y}\times V)$ satisfying $\int_{V \times Y} g(y,v) \psi^*(v)d\nu(v) dy =0$, there exists a unique function $R^\eta \in \mathcal{D}$ 
such that
\begin{equation}\label{18}
T^\eta(R^\eta)= g\quad \mbox{and}\quad \int_{V\times Y} R^\eta(y,v)\psi^*(v)d\nu(v) dy =0.
\end{equation}
We denote $R^\eta = \left(T^{\eta}\right)^{-1}(g).$ There exists $\eta_0>0$ such that
\begin{equation}
\label{est-t}
||R^\eta||_{H^k(Y,L^2(V))}\leq {C\over \eta^2}||g||_{H^k(Y,L^2(V))}~,~~~~ 0<\eta<\eta_0.
\end{equation}
If in addition, $\int_V g \psi^*(v)d\nu(v)=0$, then
\begin{equation}\label{est-t-2}
||R^\eta||_{H^k(Y,L^2(V))}\leq \frac{C}{\eta}||g||_{H^k(Y,L^2(V))}.
\end{equation}
Moreover, the following symmetry implications hold true
\begin{equation}
\label{sym-t}
\left\{
\begin{array}{lllllll}
\mbox{If }\,  g(-y,-v) = &g(y,v) &a.e.& \mathrm{then}& R^\eta(-y,-v) = &R^\eta(y,v)&a.e.\\[3mm] 
\mbox{If }\,  g(-y,-v) = &- g(y,v) & a.e.& \mathrm{then}& R^\eta(-y,-v) = &- R^\eta(y,v)& a.e. 
\end{array}
\right.
\end{equation}
(iv) For $g^*\in L^2_{per}(\Rm^d_{y}\times V)$ satisfying $\int_{V \times Y} \psi^\eta(y,v) g^*(y,v) d\nu(v) dy =0$ there exists a unique function 
$R^{\eta*}\in \mathcal{D}$ such that 
\begin{equation}\label{18*}
T^{\eta*}(R^{\eta*})= g^*\quad \mbox{and}\quad \int_{V \times Y} R^{\eta*}(y,v) \psi^{\eta}(y,v)d\nu(v) dy=0~.
\end{equation}
We denote $R^{\eta*} =\left( T^{\eta*}\right)^{-1}( g^*).$ There exists $\eta_0>0$ such that
\begin{equation}
\label{est-t*}
||R^{\eta*}||_{H^k(Y,L^2(V))}\leq {C\over \eta^2}||g^*||_{H^k(Y,L^2(V))}~,~~~~ 0<\eta<\eta_0~.
\end{equation}
If moreover, $\int_V g^* \psi d\nu(v)=0$, then
\begin{equation}\label{est-t*-2}
||R^{\eta^*}||_{H^k(Y,L^2(V))}\leq \frac{C}{\eta}||g^*||_{H^k(Y,L^2(V))}.
\end{equation}
Finally, the following symmetry implications hold true
\begin{equation}
\label{sym-t*}
\left\{
\begin{array}{lllllll}
\mbox{If }\,  g^*(-y,-v) = &g^*(y,v) &a.e.& \mathrm{then}& R^{\eta*}(-y,-v) = &R^{\eta*}(y,v)&a.e.\\[3mm] 
\mbox{If }\,  g^*(-y,-v) = &- g^*(y,v) & a.e.& \mathrm{then}& R^{\eta*}(-y,-v) = &- R^{\eta*}(y,v)& a.e. 
\end{array}
\right.
\end{equation}
\end{proposition}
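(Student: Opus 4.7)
The proof will follow the strategy of \cite{NBAPuVo}, with the main new contributions being the quantitative estimates \eqref{est-t}, \eqref{est-t-2}, \eqref{est-t*} and \eqref{est-t*-2}. I will first dispatch parts (i)--(iv) (kernel/range/adjoint characterization, existence of $R^\eta$ and $R^{\eta*}$, and the symmetry implications \eqref{sym-t}--\eqref{sym-t*}) as direct adaptations of \cite{NBAPuVo}. Applying Krein--Rutman to the compact, positivity-preserving resolvent of $T^\eta$ under (H1)--(H3) shows that $0$ is a simple eigenvalue of $T^\eta$ whose kernel is spanned by a positive $\psi^\eta$. Since $\psi^*(v)$ depends only on $v$ and lies in $\ker Q^*$, it automatically lies in $\ker T^{\eta*}$, so the range of $T^\eta$ is the orthogonal complement of $\psi^*$ in the $\int\cdot\psi^*\,d\nu\,dy$ pairing. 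The symmetry statements follow because $T^\eta$ is invariant under $(y,v)\mapsto(-y,-v)$ by (H4), so the unique solution inherits the parity of the data.

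The heart of the proof is \eqref{est-t}. I will decompose
$$R^\eta(y,v) = \rho(y)\,\psi(y,v) + s(y,v), \qquad \rho(y) := \int_V R^\eta(y,v)\,\psi^*(v)\,d\nu(v),$$
which, using $\int_V \psi\psi^*\,d\nu = 1$, forces $\int_V s(y,v)\psi^*(v)\,d\nu(v) = 0$ a.e.\ in $y$. Projecting $T^\eta R^\eta = g$ against $\psi^*$ and using $Q^*\psi^* = 0$ together with the no-drift identity $\int_V v\psi\psi^*\,d\nu = 0$ from (H4) yields the macroscopic constraint
$$\eta\,\nabla_y\cdot\!\int_V v\,s\,\psi^*\,d\nu(v) \;=\; \int_V g\,\psi^*\,d\nu(v) \;=:\; \tilde g(y).$$
On the other hand, $Q s = g - \eta v\cdot\nabla_y(\rho\psi) - \eta v\cdot\nabla_y s$ has a right-hand side with zero $\psi^*$-average in $v$ by the previous identity, so the spectral gap of $Q$ permits inversion:
$$s \;=\; Q^{-1} g \;-\; \eta\,Q^{-1}\bigl(v\cdot\nabla_y(\rho\psi)\bigr) \;-\; \eta\,Q^{-1}\bigl(v\cdot\nabla_y s\bigr).$$
Substituting back into the macroscopic constraint produces a closed elliptic equation for $\rho$,
$$\eta^2 L(\rho) \;=\; \tilde g \;-\; \eta\,\nabla_y\!\cdot\!\!\int_V v\,Q^{-1}g\,\psi^*\,d\nu(v) \;+\; \eta^2\,\nabla_y\!\cdot\!\!\int_V v\,Q^{-1}\bigl(v\cdot\nabla_y s\bigr)\psi^*\,d\nu(v),$$
with $L$ the operator of Proposition \ref{propl}.

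Under the hypothesis $\int_{Y\times V} g\psi^*\,d\nu\,dy = 0$, the right-hand side above has vanishing $Y$-average (the last two contributions being $y$-divergences and $\int_Y\tilde g = 0$), so Proposition \ref{propl} gives $\|\rho\|_{H^{k+2}(Y)} \le C\eta^{-2}\|g\|_{H^k(Y\times V)}$ up to self-consistency terms. Combined with the standard $Q^{-1}$ estimate $\|s\|_{H^k} \le C\|g\|_{H^k} + C\eta\|\nabla_y(\rho\psi)\|_{H^k} + C\eta\|\nabla_y s\|_{H^k}$ (using the smoothness of $\sigma$ in (H3) to propagate $y$-regularity), a Neumann-series argument valid for $\eta<\eta_0$ small closes the coupled system and delivers \eqref{est-t}. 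The refinement \eqref{est-t-2} then follows by observing that when $\int_V g\psi^*\,d\nu = 0$ for a.e.\ $y$, we have $\tilde g\equiv 0$, so the leading $\eta^{-2}$ contribution to $\rho$ vanishes and one recovers a full power of $\eta$. The adjoint estimates \eqref{est-t*} and \eqref{est-t*-2} are proved identically, with $(Q,\psi,\psi^*,L)$ replaced by $(Q^*,\psi^*,\psi^\eta,L^*)$ and the no-drift condition supplied by $\int_V v\psi^\eta\psi^*\,d\nu = 0$ from (H4). The main obstacle is the self-referential nature of the coupled system for $(\rho,s)$ at the $H^k$ level: the $O(\eta^2)$ correction in the equation for $\rho$ involves $s$, which itself depends on $\rho$, so the bootstrap must simultaneously control elliptic inversion through $L^{-1}$ and kinetic inversion through $Q^{-1}$ while preserving $y$-derivatives uniformly in $\eta$.
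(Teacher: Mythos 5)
Your handling of parts (i)--(iv) and of the symmetry implications \eqref{sym-t}, \eqref{sym-t*} matches the paper, which indeed treats them as direct adaptations of \cite{NBAPuVo}, and your micro--macro algebra is correct: the decomposition $R^\eta=\rho\psi+s$ with $\int_V s\,\psi^*d\nu=0$, the macroscopic constraint $\eta\,\nabla_y\cdot\int_V v\,s\,\psi^*d\nu=\tilde g$, and the closed equation $\eta^2L(\rho)=\tilde g-\eta\,\nabla_y\cdot\int_V v\,\psi^*Q^{-1}g\,d\nu+\eta^2\,\nabla_y\cdot\int_V v\,\psi^*Q^{-1}(v\cdot\nabla_y s)\,d\nu$ are all consistent with Proposition \ref{propl} and the no-drift condition from (H4). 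The genuine gap is at the one step that carries the entire difficulty of \eqref{est-t} and \eqref{est-t-2}: the assertion that ``a Neumann-series argument valid for $\eta<\eta_0$ closes the coupled system.'' Your fixed-point relation $s=Q^{-1}g-\eta\,Q^{-1}\bigl(v\cdot\nabla_y(\rho\psi)\bigr)-\eta\,Q^{-1}\bigl(v\cdot\nabla_y s\bigr)$ loses one $y$-derivative per iteration: the map $s\mapsto\eta\,Q^{-1}(v\cdot\nabla_y s)$ is not bounded on $H^k(Y,L^2(V))$, only $H^{k+1}\to H^k$ with norm $O(\eta)$, so estimating $\|s\|_{H^k}$ calls for $\|s\|_{H^{k+1}}$, which calls for $\|s\|_{H^{k+2}}$, and so on. The factor $\eta$ gained at each step does not compensate, because the only a priori control of the higher norms comes from differentiating the equation, which creates commutators $[\partial_y^j,Q]$ (since $\sigma$ depends on $y$) whose inversion through $(T^\eta)^{-1}$ costs $\eta^{-2}$ per derivative. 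This is precisely the obstruction the paper flags (``a direct application of the estimate proved in \cite{NBAPuVo} would make us lose two powers of $\eta$ for each derivative''); it is why \eqref{est-t}--\eqref{est-t-2} are stated as the new content of this proposition, and your proposal does not overcome it.

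The paper's actual proof uses a mechanism for which your argument has no counterpart. For \eqref{est-t-2} it argues by compactness and contradiction: normalize $\|\tilde R^\eta\|=1$ with data $\eta\tilde g^\eta\to0$, use the dissipation inequality for $Q$ to force the fluctuation $\delta^\eta\to0$, derive the limiting elliptic equation for the density $\gamma^\eta$ and show $\gamma^\eta\to0$ using the normalization $\int_{Y\times V}\tilde R^\eta\psi^*\,d\nu\,dy=0$, contradicting $\|\tilde R^\eta\|=1$. For \eqref{est-t} it proceeds by induction on the number of derivatives, again by contradiction, and the decisive point is a commutator rearrangement: testing $\sum_{|i|<|k|}\partial_y^{k-i}Q(\partial_y^i\tilde R^\eta)$ against $\chi^*=Q^{*-1}(v\psi^*)$ and exploiting that $\psi^*$ is independent of $y$ --- so that $\partial_y^k\bigl(Q^*\chi^*\bigr)=0$ --- the a priori dangerous $\frac1\eta$ contribution is rewritten in terms of $\partial_y^{k-j}\bigl(Q(\tilde R^\eta)\bigr)$ and then, substituting the equation $Q(\tilde R^\eta)=\eta^2\tilde g^\eta-\eta\,v\cdot\nabla_y\tilde R^\eta$, acquires an extra power of $\eta$ and converges to zero in $H^{-2}(Y,L^2(V))$, closing the induction without derivative loss in $\eta$. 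Without this structural use of the $y$-independence of $\psi^*$ (or an equivalent device), your bootstrap cannot close, so the proposal as written does not establish \eqref{est-t}, \eqref{est-t-2}, or their adjoint analogues \eqref{est-t*}, \eqref{est-t*-2}.
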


\begin{proof}
{\bf Step 1:} For the proof of estimate (\ref{est-t-2}), we argue by contradiction. Assume that $||\tilde R^\eta||=1$ and $||\tilde g^\eta||\rightarrow_{\eta\rightarrow0} 0$ with
$$
\eta v\cdot\nabla_y \tilde R^\eta+Q(\tilde R^\eta)=\eta \tilde g^\eta.
$$
Decompose $\tilde R^\eta=\gamma^\eta(y)\psi(y,v)+\delta^\eta(y,v)$, we get by the dissipation property
\begin{eqnarray*}
||\delta^\eta||^2_{L^2(Y\times V)}&\leq& C\int_Y\int_V Q(\tilde R^\eta)\frac{\tilde R^\eta}{\psi}\psi^*d\nu(v)dy\\
\\
&\leq&\int_Y\int_V(\eta\tilde g^\eta-\eta v\cdot\nabla_y\tilde R^\eta)\frac{\tilde R^\eta}{\psi}\psi^*d\nu(v)dy\\
\\
&\leq &C(\eta||\tilde R^\eta||_{L^2(Y\times V)}||\tilde g^\eta||_{L^2(Y\times V)}+\eta ||\tilde R^\eta||_{L^2(Y\times V)}^2)
\end{eqnarray*}
which implies that $||\delta^\eta||_{L^2(Y\times V)}\rightarrow 0$ when $\delta$ goes to zero.

On the other hand
$$
\eta v\cdot\nabla_y\delta^\eta+Q(\delta^\eta)= \eta\tilde g^\eta-\eta v\cdot\nabla_y(\gamma^\eta \psi)
$$
and an integration with respect to $v$ against $\psi^*$ gives
$$
\mbox{div}_y\int_V v\delta^\eta\psi^* d\nu(v)=\int \tilde g^\eta \psi^*d\nu(v)=0.
$$
Recalling that $\chi^*=Q^{*-1}(v\psi^*)$, we write
\begin{eqnarray*}
&&\mbox{div}_y\int_V v\delta^\eta \psi^*d\nu(v)= -\mbox{div}_y(\int_V\chi^*Q(\delta^\eta)d\nu(v))
\\
&=&\eta\mbox{div}_y(\int_V\chi^*v\cdot\nabla_y \delta^\eta d\nu(v))+\eta\mbox{div}_y(\int_V\chi^*v\cdot\nabla_y (\gamma^\eta \psi))
-\eta\mbox{div}_y(\int_V\chi^*\tilde g^\eta d\nu(v))
\end{eqnarray*}
which leads to the following elliptic equation for $\gamma^\eta$
$$
L(\gamma^\eta)=\mbox{div}_y(\int_V\chi^*v\cdot\nabla_y \delta^\eta d\nu(v))-\mbox{div}_y(\int_V\chi^*\tilde g^\eta d\nu(v)).
$$
Since the right-hand side goes to zero in $H^{-2}(Y)$, we obtain that $\gamma^\eta-C^\eta\rho^0(y)\rightarrow 0$.
But
\begin{eqnarray*}
C^\eta-\int_Y\gamma^\eta dy&=&-\int_Y L^{-1}(\mbox{div}_y(\int_V\chi^*v\cdot\nabla_y \delta^\eta d\nu(v))-\mbox{div}_y(\int_V\chi^*\tilde g^\eta d\nu(v))) \rightarrow 0.
\end{eqnarray*}
Moreover
$$\begin{array}{rcl}
\dps\int_Y\gamma^\eta(y)dy&=&\dps\int_{V\times Y} \gamma^\eta(y)\psi(y,v)\psi^*(v)dyd\nu(v)=\int_{V\times Y}(R^\eta(y)-\delta^\eta(y,v))\psi^*(v)dyd\nu(v)\\
\\&=&\dps-\int_{V\times Y}\delta^\eta(y,v)\psi^*(v)dyd\nu(v)\longrightarrow 0 \mbox{ when }\eta\rightarrow 0.
\end{array}$$
Finally, we obtained that $\tilde R^\eta\rightarrow 0$ strongly which leads to a contradiction.

\medskip

{\bf Step 2:} Proof of estimate (\ref{est-t}).  We first recall that $||R^\eta||_{L^2(Y,V)}\leq C$. 
A direct application of the estimate  proved in \cite{NBAPuVo} would make us lose two powers of $\eta$ for each derivative.  The proof of \eqref{est-t} requires additional computations.

We prove the result by induction. Assume that for any multi-index $i$ satisfying $|i|<|k|$, we have  $||\partial_y^i R^\eta||\leq C$. To prove that $||\partial^k_yR^\eta||_{L^2(Y,V)}\leq C$, we argue by contradiction. 

First, we note that for any $f$ and any multi-indice $k=(k_1,k_2,\cdots,k_d)$, we have
$$\partial_y^k(Q^*(f))=\sum_{i_1}^{|k_1|}\cdots\sum_{i_d}^{|k_d|}\Pi_{l=1}^d(C^{i_l}_{k_l})\partial^{k_1-i_1}_{y_1}\cdots\partial^{k_d-i_d}_{y_d}Q(\partial_{y_1}^{i_1}\cdots\partial_{y_d}^{i_d}f).
$$
Thus, we write
$$
\eta v\cdot \nabla_y \partial^k_y\tilde R^\eta+Q( \partial^k_y\tilde R^\eta)=\eta^2\partial^k_y\tilde g^\eta-\sum_{|i|=0}^{|k|-1}{\bf C^{i}_{k}}\partial^{k-i}_{y}Q(\partial_{y}^{i}\tilde R^\eta)
$$
where ${\bf C^{i}_{k}}=\Pi_{l=1}^d(C^{i_l}_{k_l})$ and where the sum runs over $i=(i_1,i_2,\cdots, i_d)$ such that $|i|\leq |k|-1$ and $i_l\leq k_l$ for $1\leq l\leq d$.
%
To obtain a contradiction, we choose a renormalization that implies $||\partial^k_y\tilde R^\eta||=1$ and $||\partial^k_y\tilde g^\eta||\rightarrow 0$ and $||\partial_y^i\tilde R^\eta||\rightarrow 0$ for any $i$ satisfying $|i|<|k|$.

We decompose $\partial^k_y\tilde R^\eta$ into two parts as follows
$$
\partial^k_y\tilde R^\eta=\gamma_k^\eta(y) \psi(y,v)+\delta_k^\eta(y,v).
$$
We obtain from the dissipation property that
\begin{eqnarray*}
||\delta_k^\eta||^2_{L^2(Y,V)}&\leq& -C \int_Y\int_V Q(\partial^k_y\tilde R^\eta)\frac{\partial^k_y\tilde R^\eta}{\psi}\psi^*d\nu(v)dy\\
\\
&=&C \int_Y\int_V (\eta^2\partial^k_y\tilde g^\eta-\sum_{|i|=0}^{|k|-1}{\bf C^{i}_{k}}\partial^{k-i}_{y}Q(\partial_{y}^{i}\tilde R^\eta)-\eta v\cdot\nabla_y\partial^k_y\tilde R^\eta)\frac{\partial^k_y\tilde R^\eta}{\psi}\psi^*d\nu(v)dy
\end{eqnarray*}
which gives
$$
||\delta_k^\eta||_{L^2(Y,V)}\longrightarrow_{\eta\rightarrow 0} 0.
$$
On the other hand, $\gamma_k^\eta$ satisfies the following elliptic equation
$$\begin{array}{rcl}
L(\gamma_k^\eta)&=&\dps\int_V\partial^k_y\tilde g^\eta \psi^*+\mbox{div}_y(\int_V\chi^*v\cdot\nabla_y(\delta_k^\eta))-\eta\mbox{div}_y\int_V \partial_y^k\tilde g^\eta\chi^* 
\\
&&\dps+\frac{1}{\eta}\mbox{div}_y\int_V\sum_{|i|=0}^{|k|-1}{\bf C^{i}_{k}}\partial^{k-i}_{y}Q(\partial_{y}^{i}\tilde R^\eta)\chi^*.
\end{array}$$
We have to control the last term, which is a priori of order $\frac{1}{\eta}$.
For that purpose, we write
$$
\int_V \partial^{k-i}_{y}Q(\partial_{y}^{i}\tilde R^\eta)\chi^*=\int_V\partial_{y}^{i}\tilde R^\eta\partial^{k-i}_{y}Q^*(\chi^*).
$$
Moreover, since $\psi^*$ does not depend on $y$
$$\partial_y^k(Q^*(\chi^*))=\sum_{i_1}^{|k_1|}\cdots\sum_{i_d}^{|k_d|}\Pi_{l=1}^d(C^{i_l}_{k_l})\partial^{k_1-i_1}_{y_1}\cdots\partial^{k_d-i_d}_{y_d}Q(\partial_{y_1}^{i_1}\cdots\partial_{y_d}^{i_d}\chi^*)=0
$$
and then
$$
\int_V\sum_{|i|=0}^{|k|-1}{\bf C^i_k}\partial^{k-i}Q(\partial^i\tilde R^\eta)\chi^*=- \int_V\sum_{i=0}^{|k|-1}{\bf C^i_k}\partial^i\tilde R^\eta\sum_{|j|=1}^{|k-i|}{\bf C^j_{k-i}}\partial_y^{k-i-j}Q^*(\partial_j\chi^*)
$$
which after rearranging the sums gives
\begin{eqnarray*}
\int_V \sum_{|i|=0}^{|k|-1}{\bf C^i_k}\partial^{k-i}Q(\partial^i\tilde R^\eta)\chi^*&=&- \int_V\sum_{|j|=1}^{|k|}\partial_j\chi^*\sum_{|i|=0}^{|k-j|}{\bf C^j_{k-i}}{\bf C^i_k}\partial_y^{k-i-j}Q(\partial^i\tilde R^\eta)\\
\\&=&-\int_V\sum_{|j|=1}^{|k|}{\bf C^j_{k}}\partial_j\chi^*\sum_{|i|=0}^{|k-j|}{\bf C^i_{k-j}}\partial_y^{k-i-j}Q(\partial^i\tilde R^\eta).
\end{eqnarray*}
Finally, this leads to
$$
\int_V \sum_{|i|=0}^{|k|-1}{\bf C^i_k}\partial^{k-i}Q(\partial^i\tilde R^\eta)\chi^*=-\int_V\sum_{|j|=1}^{|k|}{\bf C^j_{k}}\partial_j\chi^*\partial_y^{k-j}(Q(\tilde R^\eta))
$$
and hence
$$
\frac{1}{\eta}\mbox{div}_y\int_V \sum_{|i|=0}^{|k|-1}{\bf C^i_k}\partial^{k-i}Q(\partial^i\tilde R^\eta)\chi^*=\frac{1}{\eta}\mbox{div}_y\int_V\sum_{|j|=1}^{|k|}{\bf C^j_{k}}\partial_j\chi^*(-\eta v\cdot\nabla_y\partial_y^{k-j}\tilde R^\eta+\eta^2\partial_y^{k-j}\tilde g^\eta).
$$
This term converges to zero in $H^{-2}(Y,L^2(V))$ when $\eta\to0$ and  leads to a contradiction.
\end{proof}


\end{document}